\begin{document}
%\input{notations.tex}
%%%%%%%%%%%%%%%%%%%%%%%%%%%%%%%%%%%%%%%%%%%%%%%%%%%%%%5
%-----------------------------------------------------------------------
\def\Sp{{\rm Sp}}
\def\Ra{{\rm Ra}}
\def\GM{{\rm GM}}

\def\per{{\rm X}}      % period map/matrix
\def\perr{{\sf q}}        %period matrix.....
\def\perdo{{\cal K}}   %period domain
\def\sfl{{\mathrm F}} %Space of filtrations
\def\sp{{\mathbb S}}  %Sphere

\newcommand\diff[1]{\frac{d #1}{dz}} %Differential operator
\def\End{{\rm End}}              %Endomorphism group

\def\sing{{\rm Sing}}            %The set of singularities
\def\spec{{\rm Spec}}            %The spectrume
\def\cha{{\rm char}}             %Charracteristic
\def\Gal{{\rm Gal}}              %The Galois group
\def\jacob{{\rm jacob}}          %the Jacobian ideal
\def\tjurina{{\rm tjurina}}      %the tjurina ideal
\newcommand\Pn[1]{\mathbb{P}^{#1}}   %Projective space of dimension #1
\def\Ff{\mathbb{F}}                  %Finite field
\def\Z{\mathbb{Z}}                   %Integer  numbers
\def\Gm{\mathbb{G}_m}                 %The multiplicative group
\def\Q{\mathbb{Q}}                   %Rational  numbers
\def\C{\mathbb{C}}                   %Complex numbers
\def\O{{\cal O}}                     %ring of integers of a number field
\def\as{\mathbb{U}}                  %Some affine space
\def\ring{{\mathsf R}}                         %A ring
\def\R{\mathbb{R}}                   %real numbers
\def\N{\mathbb{N}}                   %natural numbers
\def\A{\mathbb{A}}                   %affine space C^n
\def\uhp{{\mathbb H}}                %upper half plane
\newcommand\ep[1]{e^{\frac{2\pi i}{#1}}}% unipotent numbers
\newcommand\HH[2]{H^{#2}(#1)}        %Hodge structures
\def\Mat{{\rm Mat}}              %Matrices
\newcommand{\mat}[4]{
     \begin{pmatrix}
            #1 & #2 \\
            #3 & #4
       \end{pmatrix}
    }                                %two by two matrices
\newcommand{\matt}[2]{
     \begin{pmatrix}                 % one by two matrix
            #1   \\
            #2
       \end{pmatrix}
    }
\def\ker{{\rm ker}}              %kernel
\def\cl{{\rm cl}}                %Chern class
\def\dR{{\rm dR}}                %The subindex dR standing for de Rham
                                     %cohomology.

\def\hc{{\mathsf H}}                 %The set of Hodge cycles.
\def\Hb{{\cal H}}                    %Hodge bundle
\def\GL{{\rm GL}}                %The liner group
\def\pese{{\sf P}}                  %Period set
\def\pedo{{\cal  P}}                  %Period domain
\def\PP{\tilde{\cal P}}              %the period domain/ discrete group
\def\cm {{\cal C}}                   %the set of CM Hodge structures
\def\K{{\mathbb K}}                  %Field representing R or C
\def\k{{\mathsf k}}                  %Arbitrary field
\def\F{{\cal F}}                     %Hodge filtration bundle
\def\M{{\cal M}}
\def\RR{{\cal R}}
\newcommand\Hi[1]{\mathbb{P}^{#1}_\infty}%the hyperplane at infinity
\def\pt{\mathbb{C}[t]}               %Polynomials in t
\def\W{{\cal W}}                     %weight filtration
\def\gr{{\rm Gr}}                %graded pieces
\def\Im{{\rm Im}}                %imaginary
\def\Re{{\rm Re}}                %Real
\def\depth{{\rm depth}}
\newcommand\SL[2]{{\rm SL}(#1, #2)}    %SL(2,Z)
\newcommand\PSL[2]{{\rm PSL}(#1, #2)}  %PSL(2,Z)
\def\Resi{{\rm Resi}}              %Residue

\def\L{{\cal L}}                     %The moduli of polarized lattices in a
                                     %fixed vector spaces.
\def\Aut{{\rm Aut}}              %Automorphism group of a vectorspace
\def\any{R}                          %Any subring of the field of complex
                                     %numbers.
\newcommand\ovl[1]{\overline{#1}}    %Conjugation of #1.

\def\T{{\cal T }}                    %Tangent space
\def\tr{{\mathsf t}}                 %Transposition of matrices
\newcommand\mf[2]{{M}^{#1}_{#2}}     %New modular functions
\newcommand\mfn[2]{{\tilde M}^{#1}_{#2}}     %New modular functions
\newcommand\bn[2]{\binom{#1}{#2}}    %Binomial
\def\ja{{\rm j}}                 %j of a two by two matrix
\def\Sc{\mathsf{S}}                  %Simple cycles
\newcommand\es[1]{g_{#1}}            %Eisenstein series
\newcommand\V{{\mathsf V}}           %Milnor vector space
\newcommand\WW{{\mathsf W}}          %Similar to Milnor vector space
\newcommand\Ss{{\cal O}}             %Structural sheaf
\def\rank{{\rm rank}}                %rank of a module
\def\Dif{{\cal D}}                   %Differentials
\def\gcd{{\rm gcd}}                  %greatest common divisor
\def\zedi{{\rm ZD}}                  %zero divisors of a module
\def\BM{{\mathsf H}}                 %Brieskorn module
\def\plf{{\sf pl}}                             %Picard-Lefschetz formula
\def\sgn{{\rm sgn}}                      %sign
\def\diag{{\rm diag}}                   %diagonal matrix
\def\hodge{{\rm Hodge}}
\def\HF{{\sf F}}                                %The hodge filtration of the brieskon module
\def\WF{{\sf W}}                               %The weight filtration of the brieskon module
\def\HV{{\sf HV}}                                %humbert variety
\def\pol{{\rm pole}}                               %pole divisor
\def\bafi{{\sf r}}
\def\codim{{\rm codim}}                               %codimension
\def\id{{\rm id}}                               %identity
\def\gms{{\sf M}}                           %Gauss-Manin system
\def\Iso{{\rm Iso}}                           %Gauss-Manin system

\def\hl{{\rm L}}    %holomorphic limit
\def\bcov{{\rm BCOV}}
\def\imF{{\rm F}}
\def\imG{{\rm G}}

\def\ord{{\rm ord}}
\newcommand\FF{F(a_1,\cdots,a_n|z)} 
\newcommand*\pFqskip{8mu}
\catcode`,\active
\newcommand*\pFq{\begingroup
        \catcode`\,\active
        \def ,{\mskip\pFqskip\relax}%
        \dopFq
}
\catcode`\,12
\def\dopFq#1#2#3#4#5{%
        {}_{#1}F_{#2}\biggl(\genfrac..{0pt}{}{#3}{#4};#5\biggr)%
        \endgroup
}
%------------------------------------------------------------------------

\def\losu {{G}}
\newcommand\dwork[1]{\delta_{#1}}  
%%%%%%%%%%%%%%%%%%%%%%%%%%%%%%%%%%%%%%%%%%%%%%%%%%%%%%%%5
%---------------------------------------------
\newtheorem{theo}{Theorem}
\newtheorem{exam}{Example}
\newtheorem{coro}{Corollary}
\newtheorem{defi}{Definition}
\newtheorem{prob}{Problem}
\newtheorem{lemm}{Lemma}
\newtheorem{prop}{Proposition}
\newtheorem{rem}{Remark}
\newtheorem{conj}{Conjecture}
\newtheorem{calc}{}

\begin{center}
{\LARGE\bf Modular-type functions attached to Calabi-Yau varieties: integrality properties
%\footnote{The text is under construction. Any comment is welcome.} 
%and differential equations for modular forms
}
\\
\vspace{.25in} {\large {\sc Hossein Movasati and Khosro M. Shokri }}
\\
Instituto de Matem\'atica Pura e Aplicada, IMPA,\\
Estrada Dona Castorina, 110,\\
22460-320, Rio de Janeiro, RJ, Brazil,\\
{\tt hossein@impa.br,\ \ khosro@impa.br} \\
%{\tt www.impa.br/$\sim$hossein}
\end{center}

\begin{abstract}
We study the integrality properties of the coefficients of the mirror map attached to
 the generalized hypergeometric function  $_{n}F_{n-1}$ with rational parameters and with a maximal unipotent monodromy. 
We present a conjecture on the $p$-integrality of the mirror map which can be verified experimentally. 
We prove its consequence on the $N$-integrality of the mirror map  for the particular cases $1\leq n\leq 4$. 
 For $n=2$ we obtain the  Takeuchi's classification of arithmetic triangle 
groups with a cusp, and for $n=4$ we prove that  $14$ examples of hypergeometric Calabi-Yau equations are the full classification
of hypergeometric mirror maps with integral coefficients. 
As a by-product we get the integrality of the corresponding algebra  of modular-type functions. These
are natural generalizations of the algebra of classical modular and quasi-modular forms in the case $n=2$. 
\end{abstract}
%{\tiny
%\tableofcontents
%}
\section{Introduction}
The integrality of the coefficients of the mirror map is a central problem in the arithmetic of Calabi-Yau varieties and it has been 
investigated in many recent articles  \cite{liya, zu02, stsa-09, ksv, kr10}. The central tool in all these works has been the so 
called Dwork method developed in \cite{dw69, dwo73}. It seems to us that the full consequences of Dwork method has not been explored, that is, 
to classify all hypergeometric differential equations with a maximal unipotent monodromy  whose mirror maps have integral coefficients.
In this article, we fill this gap and give a  computable condition on the parameters of a hypergeometric function which conjecturally computes all
the primes which appear in the denominators of the coefficients of the mirror map. We verify this conjecture and some of its consequences 
in many particular cases and give many computational evidence for its validness.

Let $a_i,\ i=1,2,\ldots,n$ be rational numbers, $0<a_i<1$,   $a=(a_1,a_2,\ldots,a_n)$ and
$$
F(a| z):=\   _{n}F_{n-1}(a_1,\ldots,a_n; 1,1,\ldots,1 | z)= \sum_{k=0}^\infty \frac{(a_1)_k\dots(a_n)_k}{k!^n} \, z^k, \quad |z|<1
$$
 be the holomorphic solution of the generalized hypergeometric differential equation 
\begin{equation}
\label{14fev}
\theta^n-z(\theta+a_1)(\theta+a_2)\cdots(\theta+a_n)=0
\end{equation}
where $(a_i)_k=a_i(a_i+1)(a_i+2)...(a_i+k-1),\,(a_i)_0 = 1,$ is the Pochhammer symbol and $\theta=z\frac{d}{dz}$. 
The first logarithmic solution in the Frobenius basis around $z=0$ has the form $\losu(a|z)+F(a|z)\log z$,  
where
\begin{equation}
\label{logsol}
\losu(a| z)=\sum_{k=1}^\infty \frac{(a_1)_k\cdots(a_n)_k}{(k!)^n}\big{[}\sum_{j=1}^ n\sum_{i=0}^{k-1}(\frac{1}{a_j+i}-\frac{1}{1+i})\big{]}z^k.
\end{equation}
The mirror map 
$$
q(a| z)=:z\exp(\dfrac{\losu(a|z)}{F(a| z)} ),
$$ is a natural generalization of the
Schwarz function.

For a prime $p$ and a formal power series $f\in\Q[[z]]$ with rational coefficients we say that $f$ is $p$-integral if $p$ does not appear 
in the denominator of its coefficients or equivalently
$f$ induces a formal power series in $\Z_p[[z]]$.  For a rational number $x$ such that $p$
does not divide the denominator of $x$, we define 
$$
\dwork{p}(x):=\frac{x+x_0}{p},
$$ 
where $0\leq x_0\leq p-1$ is the unique integer such that $p$ does not
divide the denominator of $\dwork{p}(x)$. We call $\dwork{p}$ the Dwork operator.
Throughout the text we assume all parameters $a_i$'s in the hypergeometric equation \eqref{14fev} are $p$- integers and such $p$
is called a good prime.

\begin{conj}
\label{main}
Let $q(a| z)$ be the mirror map of the generalized hypergeometric function, defined as above. Let 
Then $q(a| z)$ is $p$-integral if and only if
\begin{equation}
 \label{5mar}
\{\dwork{p}(a_1),\dwork{p}(a_2)\}=\{a_1,a_2\}, \hbox{ or }\{1-a_1,1-a_2\} \hbox{ for }  \quad n=2
\end{equation}
and
\begin{equation}
 \label{5mar2013}
\{\dwork{p}(a_1),\dwork{p}(a_2), \dwork{p}(a_3),\ldots, \dwork{p}(a_n)\}=\{a_1,a_2,a_3,\ldots, a_n\},\ \ \hbox{ for }   n\not= 2.
\end{equation}
\end{conj}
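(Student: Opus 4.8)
\medskip
\noindent The plan is to argue $p$-adically, one good prime at a time. Since a power series over $\Q$ lies in $\Z[1/N][[z]]$ for a suitable $N$ as soon as only finitely many primes occur in its denominators, and $N$-integrality amounts to $p$-integrality for each $p\mid N$, it suffices to fix a good prime $p$ and prove the $p$-integrality statement; I take $p$ odd, the prime $2$ requiring only the familiar extra care in the Dieudonn\'e--Dwork step. The starting observation is the identity
$$
\frac{\losu(a|z)}{F(a|z)}\ =\ \frac{\partial}{\partial\epsilon}\Big|_{\epsilon=0}\log F\big((a_1+\epsilon,\ldots,a_n+\epsilon)\,\big|\,z\big),
$$
obtained by differentiating the Pochhammer products in $\epsilon$, the lower parameters being $1+\epsilon$. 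Putting $h(z):=\losu(a|z)/F(a|z)\in z\Q[[z]]$, so that $q(a|z)/z=\exp(h(z))$, the Dieudonn\'e--Dwork lemma reduces the $p$-integrality of $q(a|z)$ to the single congruence
$$
h(z)\ \equiv\ \tfrac1p\,h(z^{p})\pmod{\Z_p[[z]]};
$$
writing $h=\sum_{k\ge1}c_kz^{k}$ this says $c_m\in\Z_p$ for $p\nmid m$ and $p\,c_{pm}\equiv c_m\pmod{p\Z_p}$ for every $m$. This is the assertion to be proved under \eqref{5mar2013} (resp.\ \eqref{5mar}) and refuted otherwise.

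\medskip
\noindent\emph{Sufficiency of the Dwork condition.} Assume $\dwork{p}$ permutes $\{a_1,\ldots,a_n\}$; for $n=2$ one must also allow the symmetry $a_i\mapsto1-a_i$, which lands again in $(0,1)$ and has to be tracked separately. The idea is to feed this stability into Dwork's congruences for the truncated hypergeometric sums $F_{[0,M)}(z):=\sum_{k=0}^{M-1}\frac{(a_1)_k\cdots(a_n)_k}{k!^{n}}z^{k}$: the $\dwork{p}$-stability of the parameter multiset (the lower parameter $1$ being automatically $\dwork{p}$-fixed) is precisely the hypothesis under which congruences of the shape
$$
F_{[0,p^{s+1})}(z)\,F_{[0,p^{s-1})}(z^{p})\ \equiv\ F_{[0,p^{s})}(z)\,F_{[0,p^{s})}(z^{p})\pmod{p^{s}\Z_p[[z]]}
$$
hold for all $s$, so that $F(a|z)/F(a|z^{p})\in\Z_p[[z]]^{\times}$ after letting $s\to\infty$. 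Running the same argument with the parameters perturbed to $(a_i+\epsilon)$ and differentiating at $\epsilon=0$ — legitimate since the congruences are uniform in the $p$-adic parameter $\epsilon$ — promotes them to congruences for $\losu$, and dividing through by $F$ yields $h(z)-\tfrac1p h(z^{p})\in\Z_p[[z]]$. The genuinely delicate point is the harmonic-type sum $\sum_{i=0}^{k-1}1/(a_j+i)$ in \eqref{logsol}: one must use the permutation $\dwork{p}$ induces on the parameters to pair the terms $1/(a_j+i)$ whose denominator is divisible by $p$, and the resulting cancellation modulo $\Z_p$ is exactly the telescoping afforded by the $\dwork{p}$-shift.

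\medskip
\noindent\emph{Necessity, and the range $1\le n\le4$.} This direction is harder and in general conjectural; it is where $n\le4$ enters. The plan is to show that if $\{\dwork{p}(a_1),\ldots,\dwork{p}(a_n)\}\neq\{a_1,\ldots,a_n\}$ then some coefficient of $q(a|z)$ genuinely carries $p$ in its denominator. For $n=1$ there is nothing to do. For $n=2$, $h(z)$ is the logarithm of the Schwarz triangle function and $q(a|z)$ is the local uniformizer at a cusp of the associated Fuchsian triangle group; $p$-integrality at every good $p$ forces that group to be an arithmetic triangle group with a cusp, and one checks that the resulting list is Takeuchi's and that membership in it is equivalent to \eqref{5mar} holding at all good $p$. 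For $n=3,4$ the plan is (i) to extract from \eqref{14fev}--\eqref{logsol} an explicit recursion for the $c_k$; (ii) to show that the first violation of $\dwork{p}$-stability already forces a prescribed $p$ in the denominator of $c_{p^{r}}$ for $r$ bounded explicitly in terms of the residues of the $a_i$ modulo $p$, so that the verification is finite for each prime; and (iii) for $n=4$, to combine this with the finiteness of the set of admissible quadruples to recover the classical list of $14$ hypergeometric Calabi--Yau equations. The $N$-integrality assertions, and the integrality of the attached algebra of modular-type functions, then follow by collecting the finitely many bad primes thus produced.

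\medskip
\noindent\emph{Main obstacle.} The real difficulty is necessity for general $n$: there is no transparent reason why the $p$-adic denominators of the $c_k$ should detect the combinatorics of $\dwork{p}$ only through stability of the parameter multiset, and the argument sketched for $n\le4$ leans on ad hoc bounds for the first bad coefficient together with the pre-existing classifications (Takeuchi for $n=2$, the $14$ Calabi--Yau families for $n=4$) rather than on a uniform mechanism. Making steps (i)--(iii) independent of $n$ — ideally by exhibiting a Frobenius-structure obstruction on the hypergeometric connection that vanishes exactly when \eqref{5mar2013} holds — is what a proof of Conjecture~\ref{main} in full would require.
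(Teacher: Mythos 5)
Your sufficiency half is, in outline, the paper's own route: reduce via the Dieudonn\'e--Dwork lemma (Lemma \ref{dieudonne}) to the congruence $h(z^p)-p\,h(z)\in p\Z_p[[z]]$ and obtain that congruence from Dwork's theorem on $\losu/F$ (Theorem \ref{9/11}) together with $\dwork{p}$-stability of the parameter multiset; your ``$\epsilon$-differentiation of truncated Dwork congruences'' and the ``pairing/telescoping of the harmonic terms'' are precisely the content of Dwork's theorem, which you sketch rather than prove, and for $n=2$ the extra case $\{\dwork{p}(a_i)\}=\{1-a_i\}$ needs the Euler identity $_2F_1(a,b,1|z)=(1-z)^{1-a-b}\,_2F_1(1-a,1-b,1|z)$ (and its logarithmic analogue) to see that $\losu/F$ is unchanged, a point you leave as ``tracked separately.'' So that direction is acceptable only at the level of a sketch resting on Dwork's results.

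The genuine gap is the necessity direction, which is exactly the part of the statement that is conjectural, and your plan neither matches what the paper actually proves nor would work as described. The paper's mechanism is: from $p$-integrality of $q(a|z)$ one gets only the congruence $\losu(\dwork{p}(a)|z)/F(\dwork{p}(a)|z)\equiv \losu(a|z)/F(a|z)\pmod{p\Z_p[[z]]}$ (Lemma \ref{nopint}); to upgrade this to an \emph{equality} one needs $p$-integrality for almost all primes in a fixed residue class (this is why only the $N$-integral version, Conjecture \ref{main2}, is proved, and only for $n\le4$ in Theorem \ref{maintheo}); the problem then becomes the purely algebraic Conjecture \ref{10d2012}, which the paper verifies on the special loci dictated by the structure of $\dwork{p}$ (Proposition \ref{27may2013}) by computing primary decompositions of the ideals $I_{n,m}$. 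None of this appears in your proposal, and your substitute steps are unsubstantiated: (ii) is only announced, and if such a per-prime finite criterion existed it would prove the full Conjecture \ref{main}, which even the paper leaves open (it explicitly conjectures, without proof, that the congruence in Lemma \ref{nopint} is an equality); for $n=2$ the inference ``$p$-integrality at all good $p$ forces an arithmetic triangle group'' is not available and is suspect in view of the paper's own remark that for $n=4$ seven of the fourteen monodromy groups are thin; and invoking the ``classical list of $14$ hypergeometric Calabi--Yau equations'' as an input begs the question, since that list is classified in the literature by properties of the monodromy/variation of Hodge structure, whereas the point of the theorem is precisely that it coincides with the list of $N$-integral mirror maps.
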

%We prove this conjecture in the case $n=1,2$.  
The conjecture for $n=1$ is an easy exercise.  
Due to the Euler identity for the Gauss hypergeometric function,  the case $n=2$ appears different from other cases, see
\S \ref{n=2}.   
For general $n$ we prove the only if part  and give computational 
evidence for the validity of the other direction, see \S\ref{23may2013}.  
In the literature one is mainly  interested to classify all $N$- integral mirror maps. This means that there is a natural number $N$ such that 
$q(a|Nz)$ has integral coefficients.\\

\textbf{Remark}.
One can see that the order of bad primes in  the coefficients of $F$, $G$
and consequently in $q$ has at most polynomial growth. Hence there is a number $N_p$ such that, $q(a|N_pz)$ is $p$- integral.
Therefore $p$ integrality of $q(a|z)$ for almost all $p$ implies that $q(a|z)$ is $N$- integral.

The Conjecture \ref{main} easily
implies
\begin{conj}
\label{main2}
 The mirror map $q(a| z)$ is $N$-integral if and only if for any good prime $p$ we have (\ref{5mar}) for $n=2$ and (\ref{5mar2013}) for 
$n\geq 3$.
\end{conj}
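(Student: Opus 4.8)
The plan is to deduce Conjecture~\ref{main2} from Conjecture~\ref{main} together with the Remark above on the at most polynomial growth of the bad primes in the coefficients of $F$, $G$, and hence of $q$. Write $D$ for the least common multiple of the denominators of $a_1,\ldots,a_n$, so that the good primes are exactly the primes not dividing $D$, and recall there are only finitely many bad primes. For the ``if'' direction, assume (\ref{5mar}) (when $n=2$), resp.\ (\ref{5mar2013}) (when $n\geq 3$), holds for every good prime $p$. Then Conjecture~\ref{main} yields that $q(a|z)$ is $p$-integral for every good prime, i.e.\ for all but finitely many $p$; the Remark above --- which produces, for each bad prime $p$, a number $N_p$ with $q(a|N_pz)\in\Z_p[[z]]$ --- then gives that $q(a|z)$ is $N$-integral, taking $N$ a common multiple of these $N_p$ and using that rescaling $z$ by an integer never introduces a new prime into a denominator.

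For the ``only if'' direction I would first record that, for a good prime $p$, the validity of the Dwork condition depends only on $p\bmod D$. Since $0<a_i<1$, the integer $x_0\in\{0,\ldots,p-1\}$ with $\dwork{p}(a_i)=(a_i+x_0)/p$ forces $\dwork{p}(a_i)\in(0,1)$; combined with $p\,\dwork{p}(a_i)\equiv a_i\pmod{\Z}$ and the $p$-integrality of $\dwork{p}(a_i)$, this identifies $\dwork{p}(a_i)$ as the unique $p$-integral element of $(0,1)$ congruent to $p^{-1}a_i$ modulo $\Z$. It follows that, for good $p$, the equality $\{\dwork{p}(a_1),\ldots,\dwork{p}(a_n)\}=\{a_1,\ldots,a_n\}$ is equivalent to ``multiplication by $p$ permutes the multiset $\{a_1,\ldots,a_n\}$ modulo $\Z$'', while $\{\dwork{p}(a_1),\dwork{p}(a_2)\}=\{1-a_1,1-a_2\}$ is equivalent to ``multiplication by $p$ sends $\{a_1,a_2\}$ to $\{-a_1,-a_2\}$ modulo $\Z$''; both are congruence conditions on $p\bmod D$. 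Granting this, I would argue by contradiction: if $q(a|z)$ is $N$-integral but the Dwork condition fails for some good prime $p_0$, then $\gcd(p_0,D)=1$ and Dirichlet's theorem supplies infinitely many primes $p\equiv p_0\pmod{D}$, all of them good and all of them violating the Dwork condition, so one may choose such a $p$ with $p\nmid N$; from $q(a|Nz)\in\Z[[z]]\subseteq\Z_p[[z]]$ and $N\in\Z_p^{\times}$, the substitution $z\mapsto N^{-1}z$ gives $q(a|z)\in\Z_p[[z]]$, i.e.\ $q(a|z)$ is $p$-integral, and by Conjecture~\ref{main} this forces the Dwork condition to hold at $p$ --- a contradiction. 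Hence the Dwork condition holds at every good prime.

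Once Conjecture~\ref{main} is granted the argument is essentially formal, which is why the paper describes the implication as easy. The one step needing a little care is the reduction, for good $p$, of the Dwork condition to a congruence condition modulo $D$: it is precisely this that makes Dirichlet's theorem applicable, and it is therefore the real content of the ``only if'' direction. (For $n=2$ I would also check that the extra alternative $\{1-a_1,1-a_2\}$ respects this reduction, which it does since $-a_i$ and $a_i$ have the same denominator.)
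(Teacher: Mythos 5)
Your derivation is correct and follows essentially the same route the paper intends when it says ``Conjecture \ref{main} easily implies'' Conjecture \ref{main2}: the ``if'' half is Lemma \ref{9/11cor} plus the Remark on bad primes, and your reduction of the Dwork condition to a congruence on $p$ modulo the common denominator combined with Dirichlet's theorem is the same device the paper itself uses in the second part of the proof of Lemma \ref{nopint}. Nothing in your argument departs from the paper's approach in any essential way.
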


\begin{theo}
\label{maintheo}
We have
\begin{enumerate} 
\item
For an arbitrary $n$ the only if part of Conjecture \ref{main} and Conjecture \ref{main2} are true. 
 \item 
Conjecture \ref{main2} for $n=1,2,3,4$ is true.
\end{enumerate}
\end{theo}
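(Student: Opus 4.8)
The plan is to treat the two parts separately, since they require quite different ingredients. For part (1), the only-if direction of Conjecture \ref{main}, I would work directly with Dwork's congruences for the coefficients of $F(a|z)$ and $G(a|z)$. Write $F(a|z)=\sum A_k z^k$ with $A_k=\frac{(a_1)_k\cdots(a_n)_k}{k!^n}$, and recall that the ratio $G/F$ controls $\log(q/z)$. The key classical fact (Dwork) is that $p$-integrality of the ``formal exponential'' $\exp(G/F)$ is governed by congruences of the shape $A_{mp}/A_m \equiv A_{mp+j}/A_{m}\pmod{p^r}$ relating the Pochhammer products at level $k$ and level $\lfloor k/p\rfloor$, and these congruences degenerate precisely when the multiset $\{a_i\}$ is not stable under $\dwork p$. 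Concretely, I would show: if $\{\dwork p(a_1),\dots,\dwork p(a_n)\}\neq\{a_1,\dots,a_n\}$ (resp.\ the $n=2$ exceptional set), then there is an explicit coefficient of $q(a|z)$ — in fact one can locate it at a small index, essentially $z^{p}$ or $z^{p-1}$ after one application of the Dwork operator — whose denominator is divisible by $p$. The arithmetic here reduces to comparing $\ord_p$ of $\prod_i (a_i)_p$ with $\ord_p(p!^{n})$ and tracking the first place where the Frobenius-type recursion on $A_k$ fails to lift mod $p$. For Conjecture \ref{main2}, the only-if part then follows formally: if the stability condition fails for some good prime $p$, then by the Remark $q(a|z)$ is not $p$-integral, and polynomial growth of $\ord_p$ in the coefficients is not enough to repair a genuine $p$ in the denominator at a fixed index once we observe the obstruction occurs at a bounded index independent of the eventual rescaling.

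For part (2), the $n=1$ case is the stated easy exercise. The substantive content is $n=2,3,4$, and here the strategy is to prove the if-direction of Conjecture \ref{main} (equivalently \ref{main2}) in these low ranks by a combination of a finiteness argument and case analysis. The point is that the stability condition ``$\{\dwork p(a_i)\}=\{a_i\}$ for all good $p$'' is extremely restrictive: it forces each $a_i$ to have a denominator all of whose prime factors $p$ act on $a_i$ by a permutation, which by a Chebotarev/cyclotomic argument (the Dwork operator $\dwork p$ on $\frac1m\Z/\Z$ is multiplication by $p^{-1}$ mod $m$) means the group generated by all good primes mod the common denominator $m$ must permute the numerators — and running $p$ over a full set of residues forces the numerator set $\{a_i m\}$ to be a union of $(\Z/m)^\times$-orbits, i.e.\ essentially $\{a_i\}$ is the set of $k/m$ with $\gcd(k,m)=1$ in a suitable symmetric pattern. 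For $n=2$ this recovers exactly Takeuchi's list of arithmetic triangle groups with a cusp, and for $n=4$ it cuts the infinitude of hypergeometric equations down to the classical $14$ Calabi-Yau cases; in each case one then checks $N$-integrality of the finitely many surviving mirror maps directly (for $n=2,3$ by the hypergeometric/modular identities, for $n=4$ by the known integrality of the $14$ mirror maps, which is already in the literature via \cite{liya, zu02}).

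The main obstacle is the if-direction in part (2), and within it the case $n=4$. Unlike $n=2$, where the Euler transformation $\,_2F_1(a_1,a_2;1;z)$ gives extra symmetry and reduces matters to Schwarz-triangle arithmetic (so the classification is Takeuchi's), for $n=3,4$ one must genuinely exploit the stability condition to produce a finite list and then verify each member. The delicate point is proving that stability under $\dwork p$ for all good $p$ is not merely necessary but, combined with the structure of \eqref{14fev} (maximal unipotent monodromy, i.e.\ all lower parameters equal to $1$), already implies $N$-integrality — this is the direction Conjecture \ref{main} leaves open in general, so for $n\le 4$ it must be established by bespoke arguments: a sharp form of Dwork's congruences at each surviving parameter tuple, showing the $p$ in the denominators that a priori could appear with polynomially growing order in fact never appears (the rescaling $N$ absorbing only the ``trivial'' denominators coming from $k!^n$). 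I expect the finiteness/classification step to be combinatorially heavy but routine, and the final integrality verification for the $14$ tuples to be citable; the real work is the passage from ``$\dwork p$-stable'' to ``$p$-integral for all but finitely many $p$'' in ranks $3$ and $4$.
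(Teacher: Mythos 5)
You have the logical structure of the theorem inverted, and the inversion hides a genuine gap. The implication ``$\dwork{p}$-stability of $\{a_1,\dots,a_n\}$ $\Rightarrow$ $p$-integrality (hence $N$-integrality)'' is \emph{not} the open direction: for every $n$ it follows at once from Dwork's congruence (Theorem \ref{9/11}) together with the Dwork--Dieudonn\'e lemma (Lemma \ref{dieudonne}); this is Lemma \ref{9/11cor}, and it is exactly what part 1 of the theorem amounts to in the paper, despite the ``only if'' wording. Your plan spends the ``real work'' of part 2 on re-establishing this implication (``stability already implies $N$-integrality \dots by bespoke sharp Dwork congruences''), while the genuinely hard implication --- integrality $\Rightarrow$ stability --- is dispatched in your part 1 by an unsubstantiated claim: that if stability fails at a good $p$, some coefficient of $q(a|z)$ at index roughly $p$ or $p-1$ acquires a $p$ in its denominator. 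That claim is essentially Conjecture \ref{main} itself, which the paper does not prove even for $n\le 4$ at a single prime; it proves only the almost-all-primes version, and by an entirely different mechanism: Lemma \ref{nopint} upgrades $p$-integrality for almost all $p$ to the exact identity $\losu(\dwork{p}(a)|z)/F(\dwork{p}(a)|z)=\losu(a|z)/F(a|z)$ for every good $p$, and then restricted instances of Conjecture \ref{10d2012} --- proved by primary decompositions of the ideals $I_{n,m}$ on the special loci dictated by Proposition \ref{27may2013} (primes with $\dwork{p}(x)=1-x$, $\dwork{p}(x)=qx-i$, $\dwork{p}(x)=x+\frac{r}{q^m}-i$) --- force $\{\dwork{p}(a_i)\}=\{a_i\}$ (with the Euler-symmetric alternative for $n=2$). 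You offer no substitute for this step; your sketch (``compare $\ord_p$ of $\prod_i(a_i)_p$ with $\ord_p(p!^n)$ and track where the recursion fails'') is not an argument, and the paper's own remark (for $p=101$, $a=(169/330,139/330)$, the truncation of $q$ through $z^{100}$ is $p$-integral even though $q$ is not) shows that no easy bound on the index of the first bad coefficient is available.

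Because of this, your part 2 is circular where it matters: your classification (numerator sets being unions of $(\Z/m)^\times$-orbits, giving the $\phi(m_1)+\cdots+\phi(m_k)$ decompositions) correctly enumerates the tuples \emph{satisfying the stability condition}, but stability is precisely what must be deduced from $N$-integrality, and your only route to that deduction is the missing part 1. Moreover, verifying $N$-integrality of the finitely many surviving tuples by citing Takeuchi or the known integrality of the $14$ cases only re-proves the direction already covered by Lemma \ref{9/11cor}; note also that the paper does not use Takeuchi's theorem as an input --- the $n=2$ list comes out of the primary decomposition (\ref{12/04/2013}) together with the Euler identity, and the agreement with Takeuchi's classification is a conclusion, not an ingredient. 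To repair your proposal you would need either a proof of the single-prime converse (which would settle Conjecture \ref{main} itself) or the paper's route through Lemma \ref{nopint} and the computational verification of the relevant cases of Conjecture \ref{10d2012} (or Roques' theorem in \cite{roq}), neither of which appears in your outline.
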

The first  part of the above theorem together with an exact formula for the smallest number $N$ such that $q(a|Nz)$  is integral, was a 
conjecture in the context of mirror symmetry  The case $a_i=\frac{i}{n+1},\ i=1,2,\ldots,n$, where $n+1$ is a prime number (resp. 
power of a prime number) is proved by Lian-Yau in \cite{liya} (resp.by  Zudilin in \cite{zu02}).  
A general format is formulated by Zudilin in \cite{zu02} and  is  proved by  Krattenthaler-Rivoal in \cite{kr10}. 
According to Conjecture \ref{main2} the $N$-integrality of
the mirror map implies that  for a fixed good prime $p$ the map $\dwork{p}$ acts on the  set $\{a_1,\cdots,a_n\}$ as a permutation. This set is decomposed into subsets of 
cardinality  $\phi(m_i),\ m_i>1, \ i=1,2,\ldots, k$, where 
$\phi$ is the arithmetic Euler function.
This is done  according to whether two elements have the same denominator or not. The numbers $m_i,\ i=1,2,\ldots$ are all possible denominators in the  set $\{a_1,\cdots,a_n\}$.
Such a decomposition is invariant under the  permutation  induced by $\dwork{p}$.  We conclude that the number of $N$-integral mirror maps is equal to the number of decompositions:
\begin{equation}
 \label{14feb2}
n=\phi(m_1)+\cdots+\phi(m_k)
\end{equation}
Like in the classical case of
the partition of a number with natural numbers we have the following generating function:
$$
24x^2-1+\prod_{m=2}^\infty \frac{1}{1-x^{\phi(m)}}=x+28x^2+4x^3+14x^4+ 14x^5+40x^6+40x^7+\cdots.
$$
(for the coefficient of $x^2$ see Table 1). 
Conjecture \ref{main2} says that the coefficient of $x^n$  counts the number 
of $N$-integral mirror maps with $n$-parameters $a_1,a_2,\ldots,a_n$.
An immediate consequence of (\ref{14feb2}) is that the number of $N$- integral mirror maps for $n=2\ell+1,\ \ell\geq 2$ exactly equals with $n=2\ell$ ones. 
Since $\phi(m)$, for $m>2$ is even, for $n=2\ell+1$ in the right hand side of \eqref{14feb2}, one of the $m_i$'s  is $2$ or equivalently one of $a_i$ is $\frac{1}{2}$. 
After canceling this from both sides we return to the 
case $n=2\ell$. 
Therefore, the number of $N$-integral mirror maps for $n=2\ell$ and $2\ell+1$ are in a one to one correspondence.

For any fixed $n$, we can use conjecture  \ref{main2} and  classify all $N$-integral mirror maps. 
For instance for $n=2$, the  mirror map is $N$-integral if and only if  $\{a_1,a_2\}$ belongs to the list in Table 1.  
The first four cases correspond to $\{\dwork{p}(a_1),\dwork{p}(a_2)\}=\{a_1,a_2\}$ and the others correspond to 
$\{\dwork{p}(a_1),\dwork{p}(a_2)\}=\{1-a_1,1-a_2\}$. If we set $\{a_1,a_2\}=\{\frac{1}{2}(1\pm \frac{1}{m_1}-\frac{1}{m_2})\}$ with
$m_1,m_2\in\N$ and $\frac{1}{m_1}+\frac{1}{m_2}>1$ then the monodromy group of the Gauss hypergeometric equation (\ref{14fev}) is a triangle group
of type $(m_1,m_2,\infty)$. In this case the above classification is reduced to the Takeuchi's classification in \cite{tak77} of arithmetic triangle 
groups with a cusp, see \cite{hokh2}. 

For $n\geq 4$ even  and with an $N$-integral mirror map $q(a| z)$, the set $\{a_1,a_2,\ldots,a_n\}$
is conjecturally invariant under $x\mapsto 1-x$, see \S\ref{dworkmap} and so we may identify $a=(a_1,a_2,\ldots,a_n)$ 
with its $\frac{n}{2}$ elements in the interval 
$[0,\frac{1}{2}]$. Below we just list these elements.  The case $n=4$ has many applications in mirror symmetry. 
We find that $q(a|z)$ is $N$-integral if and only if $(a_1,a_2,a_3,a_4)$ belongs to the well-known $14$ 
hypergeometric cases of Calabi-Yau equations. The first two elements $(a_1,a_2)$ are given in Table 1.
Note that in \cite{dormor}, these $14$ cases are classified through properties of the monodromy group of the differential
equation (\ref{14fev}) 
which comes from the variation of Hodge structures of Calabi-Yau varieties. The fact that the corresponding mirror maps are 
$N$-integral and are the only ones with this property is a non-trivial statement.
For $n=6$ we find  $40$ examples of $N$-integral mirror maps. In this case $(a_1,a_2,a_3)$ are given in Table 1. Finding $40$ examples of 
one parameter families of Calabi-Yau varieties of dimension $5$ may be done in a similar way as in the $n=4$ case. This is left for a future 
work. Note that for an arbitrary $n$ the Picard-Fuchs equation of the Dwork family $x_1^{n+1}+\cdots+x_{n+1}^{n+1}-(n+1)z^{-\frac{1}{n+1}} x_1x_2\ldots x_{n+1}=0$
corresponds to the case $a_i=\frac{i}{n+1},\ \ i=1,2,\ldots,n$.

The discussion for $n=2$ suggests that the $N$-integrality of the mirror map is related  to the arithmeticity of 
the monodromy group of (\ref{14fev}). However, note that for $n=4$ it is proved that the seven of the fourteen examples are thin, that is,
they have infinite index in $\Sp (4,\Z)$, see \cite{brth-12}. Note also that, we know a complete classification of the Zariski closure
of the monodromy group of (\ref{14fev}), see \cite{beu89}.
Since in \cite{ho22} we have constructed a new kind of modular forms theory
for these examples of thin groups, see also \S \ref{mtp}, it is reasonable to weaken the notion of arithmeticity so that the monodromy group
of (\ref{14fev}) with an $N$-integral mirror map becomes arithmetic in this new sense. For a discussion of thin groups
and their applications see Sarnak's lecture notes \cite{sar12} and the references therein.  
Finding the  smallest $N$ is not a trivial problem and is discussed in the articles \cite{liya, zu02,  kr10} for particular cases including the 14
cases in Table 1. For the following corollary we assume this $N$.

\begin{coro}
\label{12june2013}
Let $n=4$ and consider one of the 14 cases in Table 1. Let also $z(q)$ be the inverse of the mirror map $q(z):=\frac{1}{N}q(a|Nz)$ and 
\begin{align*}
&u_0(z):=z,\quad u_i(z):=\theta^{i}(F(Nz)),\ i=0,1,2,3, \\ 
&u_{i+4}(z):=F(Nz)\theta^i (\losu(Nz))-\losu(Nz)\theta ^i(F(Nz)),\quad i=1,2.
\end{align*} 
We have 
\begin{equation}
\label{zud}
u_0(q),u_i(z(q))\in \Z[[q]],\ \ i=0,1,\ldots,6.
\end{equation}
\end{coro}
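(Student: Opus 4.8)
The plan is to deduce (\ref{zud}) from Theorem~\ref{maintheo} by a formal manipulation of power series, organized in three steps.

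\emph{Step 1 (the building blocks as series in $z$).} By Theorem~\ref{maintheo}(2), Conjecture~\ref{main2} holds for $n=4$, and each of the $14$ hypergeometric cases satisfies the Dwork permutation condition (\ref{5mar2013}) at every good prime $p$. Hence the rescaled mirror map
$$
q(z)=\frac{1}{N}\,q(a| Nz)=z\exp\!\Big(\frac{\losu(Nz)}{F(Nz)}\Big)
$$
is $p$-integral for every good $p$; together with the fact (established in \cite{liya,zu02,kr10} for these families, and already built into the choice of $N$) that the finitely many bad primes are cleared as well, this yields $q(z)\in\Z[[z]]$, and likewise $F(Nz)\in\Z[[z]]$. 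Since $q(z)=z+O(z^{2})$ has leading coefficient $1$, the usual inversion recursion produces its compositional inverse in the form $z(q)=q+O(q^{2})\in\Z[[q]]$. Finally $\theta=z\frac{d}{dz}$ carries $\Z[[z]]$ into itself, so $\theta^{i}F(Nz)\in\Z[[z]]$ for all $i\ge 0$.

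\emph{Step 2 (eliminating the logarithm).} Set $L(z):=\log\big(q(z)/z\big)=\losu(Nz)/F(Nz)$, so that $\losu(Nz)=F(Nz)\,L(z)$. Because $q(z)/z\in 1+z\,\Z[[z]]$ is a unit of $\Z[[z]]$, the logarithmic derivative $\theta L=\theta(q/z)/(q/z)$ lies in $z\,\Z[[z]]$, and therefore so does $\theta^{k}L$ for each $k\ge 1$. Writing $F=F(Nz)$ and substituting $\losu(Nz)=F\,L$, a short computation gives
$$
F\,\theta\losu(Nz)-\losu(Nz)\,\theta F=F^{2}\,\theta L,\qquad
F\,\theta^{2}\losu(Nz)-\losu(Nz)\,\theta^{2}F=2F(\theta F)\,\theta L+F^{2}\,\theta^{2}L .
$$
In particular the $\log z$ of the second Frobenius solution cancels, and both right-hand sides lie in $z\,\Z[[z]]\subseteq\Z[[z]]$. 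Thus every one of the seven functions $u_{j}$ — the identity $z$, the four series $\theta^{i}F(Nz)$ with $0\le i\le 3$, and the two series just displayed — belongs to $\Z[[z]]$.

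\emph{Step 3 (substitution $z\mapsto z(q)$) and the main obstacle.} Since $z(q)\in q\,\Z[[q]]$ has vanishing constant term, the substitution $z\mapsto z(q)$ defines a continuous $\Z$-algebra homomorphism $\Z[[z]]\to\Z[[q]]$; applying it to the seven series of Step~2 gives $u_{j}(z(q))\in\Z[[q]]$ for all $j$, and in particular $u_{0}(z(q))=z(q)\in\Z[[q]]$, which is (\ref{zud}). Everything past Step~1 is purely formal, so the one genuinely arithmetic ingredient is Step~1: Theorem~\ref{maintheo}(2) together with the compatibility of the normalizing constant $N$ with $F$ as well as with $q$. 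If the minimal $N$ for $q$ were strictly smaller than the normalizer of $F(Nz)$ one would pass to their least common multiple — which only strengthens the hypothesis and, for the $14$ cases at hand, is unnecessary; note moreover that $\losu(Nz)$ itself never has to be integral, only the combinations above. A final point to check is that the two cancellations in Step~2 are exact identities of formal power series, which is immediate from $\losu(Nz)=F(Nz)L(z)$.
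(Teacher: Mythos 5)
Your argument is correct, and it shares the overall skeleton of the paper's proof (integrality of $F(Nz)$, of $q(z)$, of the compositional inverse $z(q)$, then substitution), but it handles the crucial step --- the integrality of $u_5$ and $u_6$ in the $z$-variable --- by a genuinely different and more elementary route. The paper works with $W(a|z)=u_5/u_1^2=\theta(\losu/F)$ and proves its $p$-integrality prime by prime: for good primes it applies $\theta$ to the Dwork congruence of Theorem \ref{9/11} and runs an induction on coefficients, and for bad primes it re-derives the congruence from the integrality of $q(z)$ (Krattenthaler--Rivoal, Theorem 1) via Lemma \ref{dieudonne}. You instead observe that $\theta(\losu(Nz)/F(Nz))=\theta\log(q(z)/z)=\theta(q/z)\cdot(q/z)^{-1}$, so once $q(z)\in z+z^2\Z[[z]]$ is known, $\theta^k L\in z\Z[[z]]$ for all $k\geq 1$ is immediate (logarithmic derivative of a unit of $\Z[[z]]$ with constant term $1$), and the exact identities $u_5=F^2\theta L$, $u_6=2F(\theta F)\theta L+F^2\theta^2 L$ finish the job; this removes the good/bad prime case distinction and the inductive congruence argument entirely, at the price of leaning completely on the integrality of $q(z)$ for the chosen $N$ --- which both proofs ultimately import from \cite{kr10} anyway. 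The one place where you are lighter than the paper is the assertion $F(Nz)\in\Z[[z]]$: the paper proves this explicitly by writing the coefficients of $F$ as products of factors $A_s(k)$ and invoking Zudilin's Lemma 1 with $N=\prod_s N_s$, whereas you cite it as built into the choice of $N$; since the corollary fixes the $N$ of \cite{liya,zu02,kr10}, this is acceptable, but for a self-contained write-up you should include the Zudilin-lemma verification (your ``pass to the least common multiple'' remark would otherwise change the constant $N$ in the statement, so it is better to check, as the paper does, that the same $N$ works for both $F$ and $q$).
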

The field of modular-type functions is generated by (\ref{zud}) and it is invariant under $q\frac{\partial}{\partial q}$. 
There are many analogies between $\Q(u_i(z(q)),\ i=0,1,\ldots,6)$ and  the field of quasi-modular forms, see for instance \cite{ho22}. 
 This includes functional equations with
respect to the monodromy group of (\ref{14fev}), the corresponding Halphen equation and so on.
However, note that the former field is of transcendental degree 
$3$, whereas this new field is of transcendental degree $7$.  For $n=2$ a similar discussion as in Corollary \ref{12june2013} leads us to the 
theory of (quasi) modular forms for the monodromy group of (\ref{14fev}), 
see for instance \cite{hokh2}.

We would like to thank M. Belolipetsky for taking our attention to the works on thin groups. Thanks also go to P. Sarnak, S. T. Yau, F. Beukers and 
W. Zudilin for comments on the first draft of the present text. 
The second author would like to thank CNPq-Brazil for financial support and IMPA for its lovely research ambient.  In the the final steps of the present text, 
F. Beukers informed us about the article \cite{roq} in which the author 
proves Conjecture \ref{10d2012} in \S \ref{1june2013} using results in differential Galois theory. As a consequence the if part of the Conjecture 
\ref{main2} follows easily. 
Our proof of the Conjecture \ref{main2} for $n=1,2,3,4$ is elementary and self-content.

{\tiny
\begin{center}
 \label{14cases}
\begin{tabular}{|c|}
%\label{sarita}
%\multicolumn{11}{c}{$***$}
\hline
$n=2$ \\ \hline
$
( 1/2 , 1/2 ),
( 2/3 , 1/3 ),
( 3/4 , 1/4 ),
( 5/6 , 1/6 ),
$
\\
$
( 1/6 , 1/6 ),
( 1/3 , 1/6 ),
( 1/2 , 1/6 ),
( 1/3 , 1/3 ),
( 2/3 , 2/3 ),
( 1/4 , 1/4 ),
( 1/2 , 1/4 ),
( 3/4 , 1/2 ),
( 3/4 , 3/4 ),
$
\\
$
( 1/2 , 1/3 ),
( 2/3 , 1/6 ),
( 2/3 , 1/2 ),
( 5/6 , 1/3 ),
( 5/6 , 1/2 ),
( 5/6 , 2/3 ),
( 5/6 , 5/6 ),
( 3/8 , 1/8 ),
( 5/8 , 1/8 ),
$
\\
$
( 7/8 , 3/8 ),
( 7/8 , 5/8 ),
( 5/12 , 1/12 ),
( 7/12 , 1/12 ),
( 11/12 , 5/12 ),
( 11/12 , 7/12 )
$\\ \hline 
$n=4$ \\ \hline
$
(1/2, 1/2), (1/3, 2/3), (1/4,1/2), ( 1/4, 1/4), (2/5, 1/5), (3/8, 1/8), (3/10, 1/10),$
\\
$
(1/2, 1/6), (1/2, 1/3), (1/3, 1/6),(1/6, 1/6),  (1/3, 1/4), (1/4, 1/6), (5/12, 1/12)
$\\ \hline
$n=6$ \\ \hline
$
(1/2, 1/2, 1/2),
(1/3, 1/3, 1/3),
(1/2, 1/2, 1/4),
( 1/2, 1/4, 1/4),
( 1/4, 1/4, 1/4),
( 1/2, 1/2, 1/3),
( 1/2, 1/3, 1/3),
$
\\
$
( 1/2, 1/2, 1/6),
( 1/2 ,1/3, 1/6),
( 1/3, 1/3, 1/6),
( 1/2, 1/6, 1/6),
( 1/3, 1/6, 1/6),
( 1/6, 1/6, 1/6),
( 3/7, 2/7, 1/7),
$
\\
$
( 1/2, 3/8, 1/8),
( 3/8 ,1/4 ,1/8),
( 4/9, 2/9 ,1/9),
( 1/2 ,2/5, 1/5),
( 1/2, 3/10 ,1/10)
( 1/2, 1/3 ,1/4),
(1/3 ,1/3, 1/4),
$
\\
$
( 1/3, 1/4 ,1/4),
( 1/2 ,1/4 ,1/6),
( 1/3, 1/4 ,1/6),
(1/4 ,1/4, 1/6),
(1/4 ,1/6 ,1/6),
( 1/2, 5/12, 1/12),
( 5/12, 1/3, 1/12),
$
\\
$
( 5/12, 1/4, 1/12),
( 5/12 ,1/6 ,1/12),
( 5/14, 3/14, 1/14),
(2/5, 1/3, 1/5),
(7/18 ,5/18, 1/18),
( 2/5, 1/4, 1/5),
(3/10, 1/4, 1/10),
$
\\
$
( 3/8, 1/3, 1/8),
( 3/8, 1/6 ,1/8),
( 2/5, 1/5, 1/6),
( 1/3 ,3/10 ,1/10),
( 3/10 ,1/6 ,1/10)
$\\ \hline
 \end{tabular}\\
\label{physics-mine}
Table 1: $N$-integral hypergeometric  mirror maps. 
\end{center}
%\end{sidewaystable}
}

%%%%%%%%%%%%%%%%%%%%%%%%%%%%%%%%%%%%%%%%%%%%%%%%%%%%%%%
\section{Dwork's theorem}
As far as we know, the main argument in the literature for proving the $p$- or $N$-integrality of mirror maps is the work of 
B. Dwork in  \cite{dwo73, dw69}.

\subsection{Dwork map}
\label{dworkmap}
\def\CC{\tilde \Z}
%For a rational number $a$, we write $r=\frac{r_1}{r_2}$, where $r_1\in \Z$ and $r_2\in\N$ are respectively the nominator and denominator of $r$ and so
% $(r_1,r_2)=1$.   %We also denote by $(a \mod b)$ the integer $0\leq x\leq b-1$ such that $a \equiv_b x$. 
The Dwork map is defined in the following way
$$
\dwork{p}: \Z_p\to \Z_p,\  \ \sum_{s=0}^\infty x_sp^s \longmapsto 1+\sum_{s=0}^\infty x_{s+1}p^s,\ \ \ 0\leq x_s\leq p-1
$$
(and so $p\dwork{p}(x)-x=p-x_0$ for $x\in\Z_p$). 
Let $\CC_p$ be the set of $p$-integral rational numbers.
% which are not in $\Z^{-}\cup\{0\}$.
We have a natural embedding $
\CC_p\hookrightarrow \Z_p$. 
The map $\dwork{p}$ leaves $\CC_p$ invariant because for $x\in \CC_p$, $\dwork{p}(x)$ is the unique number such that
$p\dwork{p}(x)-x \in \Z\cap [0,p-1]$. 
For a rational number $x=\frac{x_1}{x_2},\ \ x_1,x_2\in\Z$ and a prime $p$ which does not divide $x_2$, we have
\begin{equation}
\label{121212}
\dwork{p}(x):=\frac{p^{-1}x_1 \hbox{ mod } x_2}{x_2},
\end{equation}
where $p^{-1}$ is the inverse of $p$ mod $x_2$ and $x_1$ and $x_2$ may have common factors. 
The denominators of $x$ and $\dwork{p}(x)$ are the same and 
$\dwork{p}(1-x)=1-\dwork{p}(x)$.  
%\label{10dec2012}
 For any finite set of rational numbers, there is a finite decomposition of prime numbers such that in each class the 
function $\dwork{p}$ is independent of
the prime $p$.
%Let $A$ be a finite set of rational numbers  and assume that for all primes except a finite number $\dwork{p}$
%is a permutation of $A$. Then
%$$
%{\rm denominator}(a)\leq \sharp A,\ \ \forall a\in A.
%$$

The following proposition easily follows from  (\ref{121212}). It will play an important role
in the proof of Theorem \ref{maintheo}. 
\begin{prop}
 \label{27may2013}
Let $0< x=\frac{t}{sq^y}<1$ be a rational number,  where $q$ is a prime and $(q,s)=1$ and  
$t$ and $sq^y$ may have common factors. 
We have
\begin{enumerate}
\item
\label{24ap--}
For primes $p$ such that $p^{-1}\equiv -1\pmod {sq^{y}}$ we have $\dwork{p}(x)=1-x$. 
\item
\label{24ap-0}
If $y=0$, that is the denominator of $x$ is not divisible by $q$, then for   primes $p$ such $p^{-1}\equiv q \pmod s$ we have
$$
\dwork{p}(x)=qx-i,\ \hbox{ for some } i=0,1,2,\ldots,q-1.
$$
\item
\label{24ap-1}
For primes $p^{-1}\equiv s+q \pmod{ sq^y}$ we have
$$
\dwork{p}(x)=qx+\frac{r}{q^y}-i,\ \hbox{ for some } \ r=0,1,\ldots, q^y-1,\ i=0,1,\ldots,q
$$
\item
\label{24ap-2}
If $y\geq 1$, that is, the denominator of $x$ may be divisible by $q$, then  for any $0\leq m\leq y$ and prime $p$ with
$p^{-1}\equiv 1+q^{y-m}s \pmod{sq^y}$ either we have $\dwork{p}(x)=x$ or we have
$$
\dwork{p}(x)=x+\frac{r}{q^m}-i,\ \hbox{ for some }\ r=1,\ldots,q^m-1,\ i=0,1.
$$
\end{enumerate}
\end{prop}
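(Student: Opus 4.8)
The plan is to deduce all four items directly from the explicit formula~\eqref{121212} for the Dwork operator. I write $x=\frac{t}{sq^y}$ as the (a priori unreduced) fraction $\frac{x_1}{x_2}$ with $x_1=t$ and $x_2=sq^y$ — this is permitted since \eqref{121212} does not require lowest terms — and let $p^{-1}$ denote the inverse of $p$ modulo $sq^y$, so that $\dwork{p}(x)=\frac{1}{sq^y}\bigl(p^{-1}t \bmod sq^y\bigr)$. In each of the four cases the hypothesis fixes $p^{-1}$ in a prescribed residue class modulo $sq^y$ (and, that residue being a unit modulo $sq^y$, such primes $p$ exist by Dirichlet's theorem), so all that remains is to substitute the prescribed residue for $p^{-1}$ and carry out the reduction modulo $sq^y$, while keeping track of how many copies of $sq^y$ get subtracted.

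For item~\ref{24ap--}, substituting $p^{-1}\equiv -1$ gives $p^{-1}t\equiv -t\pmod{sq^y}$, and since $0<t<sq^y$ we have $-t\bmod sq^y=sq^y-t$; hence $\dwork{p}(x)=\frac{sq^y-t}{sq^y}=1-x$. For item~\ref{24ap-0} we are in the case $y=0$, so $x_2=s$ and $p^{-1}t\equiv qt\pmod s$; writing $qt\bmod s=qt-is$ with $i=\lfloor qt/s\rfloor$ and using $0<t<s$ to see that $0\le i\le q-1$, we obtain $\dwork{p}(x)=\frac{qt-is}{s}=qx-i$.

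For item~\ref{24ap-1} I substitute $p^{-1}\equiv s+q$. Since $qx\cdot sq^y=qt$, subtracting $qt$ from $\dwork{p}(x)\cdot sq^y=(s+q)t-M\,sq^y$ (here $M\in\Z$ is the number of copies of $sq^y$ removed in the reduction) leaves $st-M\,sq^y$, so $\dwork{p}(x)-qx=\frac{t}{q^y}-M$. Writing $t=aq^y+r$ with $0\le r\le q^y-1$ turns this into $\dwork{p}(x)-qx=\frac{r}{q^y}-i$ with $i:=M-a\in\Z$, which is the asserted shape. For item~\ref{24ap-2} I substitute $p^{-1}\equiv 1+q^{y-m}s$. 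Since $x\cdot sq^y=t$, subtracting $t$ from $\dwork{p}(x)\cdot sq^y=(1+q^{y-m}s)t-M\,sq^y$ leaves $q^{y-m}st-M\,sq^y$, so $\dwork{p}(x)-x=\frac{t}{q^m}-M$; writing $t=bq^m+r$ with $0\le r\le q^m-1$ gives $\dwork{p}(x)-x=\frac{r}{q^m}-i$ with $i:=M-b\in\Z$. If $r=0$ this reads $\dwork{p}(x)=x-i$, and then $\dwork{p}(x)\in[0,1)$ together with $x\in(0,1)$ forces $i=0$, i.e. $\dwork{p}(x)=x$; if $r\ge 1$ it is the asserted formula with $1\le r\le q^m-1$.

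The one step that needs real care — what I would flag as the main, if mild, obstacle — is bounding the integer ``carry'' $i$ in items~\ref{24ap-1} and~\ref{24ap-2}, so that it actually lands in $\{0,\dots,q\}$, respectively $\{0,1\}$, rather than in some a priori wider range. I would settle this using only the interval constraints that come for free: $\dwork{p}(x)\in[0,1)$ by definition, $x\in(0,1)$ by hypothesis, hence $qx\in(0,q)$, and $\frac{r}{q^y},\frac{r}{q^m}\in[0,1)$. Substituting these into $i=\frac{r}{q^y}-\dwork{p}(x)+qx$, respectively $i=\frac{r}{q^m}-\dwork{p}(x)+x$, confines $i$ to $(-1,q+1)\cap\Z=\{0,\dots,q\}$, respectively to $(-1,2)\cap\Z=\{0,1\}$. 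The only other point to bear in mind throughout is that $t$ and $sq^y$ may share a common factor; this is harmless precisely because \eqref{121212} holds for unreduced fractions, so no reduction of $\frac{t}{sq^y}$ is ever required.
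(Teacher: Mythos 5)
Your proof is correct and is exactly the verification the paper intends: the paper gives no written proof, merely asserting that the proposition ``easily follows from (\ref{121212})'', and your substitution of the prescribed residue classes into (\ref{121212}) for the unreduced fraction $t/(sq^y)$, together with the interval bounds $\delta_p(x)\in[0,1)$, $x\in(0,1)$ confining the carries $i$, is precisely that computation. (The aside invoking Dirichlet's theorem is unnecessary since each item is conditional on primes in the stated class; note that in the edge case $m=y$ with $q\mid s+1$ the residue $1+q^{y-m}s$ is not even a unit modulo $sq^y$, so that case holds vacuously.)
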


%\begin{proof}
%Let $A=\cup_{i=1}^k A_i$ be the decomposition of $A$ according to the equality of denominators.
%The $\dwork{p}$ operator leaves each $A_i$ invariant.  Let $a=\frac{a_{1}}{a_{2}}$ be in some $A_i$. If $\phi(a_2)>m$ then
%$\phi(a_2)>m_i:=\#(A_i)$ and so we can choose $c\in\N$ such that $(c,a_2)=1$ and $ca_1 \mod a_{2}$ does no appear
%in the nominators of the group $A_i$. Now , we chosse primes $pc\cong_{a_{1}} 1$ and get a contradiction.
%\end{proof}

%%%%%%%%%%%%%%%%%%%%%%%%%%%%%%%%%%%%%%%%%%%%%%%%%%%%%%%%%%%%%%%%%%%%%%%%%%%%%%
\subsection{Dwork lemma and theorem on hypergeometric functions}
In this section we mention some lemmas and a theorem of Dwork. 
%To be self consistent and to illustrate the
%main idea of the Dwork method and we give briefly the proofs. 
The following lemma is crucial in the argument of $N$-integrality of the mirror map. 
Since the non trivial application of this lemma was first given by
Dwork, it is known as  Dwork lemma, however,  Dwork himself in \cite{dw94} associates  it to Dieudonn\'{e}.
\begin{lemm}
\label{dieudonne}
Let $f(z)\in 1+z\Q_p[[z]]$. Then $f(z)\in 1+z\Z_p[[z]]$, if and only if 
$$
\frac{f(z^p)}{(f(z))^p}\in 1+p\Z_p[[z]].
$$
\end{lemm}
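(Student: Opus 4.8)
The plan is to prove both implications by elementary manipulations of formal power series with $p$-adic coefficients, the real work being in the "if" direction, which I would settle by induction on the order of a hypothetical coefficient of $f$ lying outside $\Z_p$.

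For the "only if" direction, assume $f \in 1 + z\Z_p[[z]]$. The essential input is the Frobenius congruence $h(z)^p \equiv h(z^p) \pmod{p\Z_p[[z]]}$, valid for every $h \in \Z_p[[z]]$: it follows by reducing modulo $p$, where $x \mapsto x^p$ is a ring endomorphism of $\Ff_p[[z]]$, and using $c^p \equiv c \pmod p$ for $c \in \Z_p$. Taking $h = f$ gives $f(z^p) - f(z)^p \in p\Z_p[[z]]$; since $f(z)^p \in 1 + z\Z_p[[z]]$ is a unit of $\Z_p[[z]]$, dividing yields $f(z^p)/f(z)^p - 1 \in p\Z_p[[z]]$, as desired.

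For the converse, write $f(z) = \sum_{k\ge 0} a_k z^k$ with $a_0 = 1$, put $u(z) := f(z^p)/f(z)^p \in 1 + p\Z_p[[z]]$, so $f(z^p) = u(z)f(z)^p$, and write $f(z)^p = \sum_j b_j z^j$. I would prove $a_k \in \Z_p$ by induction on $k$, the case $k = 0$ being clear. Assume $a_0,\dots,a_{k-1} \in \Z_p$. Then $b_j \in \Z_p$ for every $j \le k-1$, since $b_j$ is a sum of monomials $a_{j_1}\cdots a_{j_p}$ with $j_1+\cdots+j_p = j$ and hence every $j_i \le j$. To isolate $a_k$ inside $b_k$ I would group the index tuples $(j_1,\dots,j_p)$ with $\sum j_i = k$ under the cyclic-shift action of $\Z/p$: all orbits have length $p$ except the diagonal ones $(j,\dots,j)$, which occur exactly when $p \mid k$. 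Reading off the contribution of the length-$p$ orbit through $(k,0,\dots,0)$ and noting that all other length-$p$ orbits involve only $a_0,\dots,a_{k-1}$, this gives $b_k = p\,a_k + p\,\varepsilon_k$ with $\varepsilon_k \in \Z_p$ when $p\nmid k$, and $b_k = a_\ell^{\,p} + p\,a_k + p\,\varepsilon_k$ with $\varepsilon_k \in \Z_p$ when $k = p\ell$.

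Finally I would equate coefficients of $z^k$ in $f(z^p) = u(z)f(z)^p$. The left-hand side contributes $0$ if $p\nmid k$ and $a_\ell$ if $k = p\ell$; on the right, since $u(z)-1 \in p\Z_p[[z]]$ and $b_j\in\Z_p$ for $j<k$, the coefficient of $z^k$ is congruent to $b_k$ modulo $p\Z_p$. Hence $b_k \in p\Z_p$ when $p\nmid k$, which with the formula for $b_k$ forces $p\,a_k\in p\Z_p$, i.e. $a_k\in\Z_p$; and $b_k\equiv a_\ell \pmod{p\Z_p}$ when $k=p\ell$, which combined with $b_k = a_\ell^{\,p}+p\,a_k+p\,\varepsilon_k$ and the Fermat congruence $a_\ell^{\,p}\equiv a_\ell\pmod{p\Z_p}$ (legitimate since $a_\ell\in\Z_p$) again gives $p\,a_k\in p\Z_p$, i.e. $a_k\in\Z_p$. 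This closes the induction. The delicate step is the case $p\mid k$: there $b_k$ also carries the term $a_\ell^{\,p}$, which need not be divisible by $p$, and only after cancelling it against $a_\ell$ via Fermat does the estimate close — without this one would obtain merely $a_k \in p^{-1}\Z_p$.
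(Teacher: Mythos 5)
The paper itself does not prove this lemma: it is quoted as the Dieudonn\'e--Dwork lemma and the proof is delegated to \cite{dw94}, p.~54, so there is no internal argument to compare against; what you supply is a correct, self-contained proof along the standard lines. Your ``only if'' direction (Frobenius congruence $h(z)^p\equiv h(z^p)\pmod{p\Z_p[[z]]}$ plus the fact that $f(z)^p$ is a unit of $\Z_p[[z]]$ because its constant term is $1$) is fine, and your ``if'' direction is the usual coefficient induction: the cyclic-shift orbit count correctly yields $b_k=p\,a_k+p\,\varepsilon_k$ when $p\nmid k$ and $b_k=a_\ell^{\,p}+p\,a_k+p\,\varepsilon_k$ when $k=p\ell$ (a tuple summing to $k$ with an entry equal to $k$ is necessarily a shift of $(k,0,\dots,0)$, so all other length-$p$ orbits only see $a_0,\dots,a_{k-1}$), and the Fermat congruence $a_\ell^{\,p}\equiv a_\ell\pmod{p\Z_p}$ is exactly what closes the case $p\mid k$. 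One micro-point you should make explicit: in the step ``the coefficient of $z^k$ on the right is congruent to $b_k$ modulo $p\Z_p$'', the convolution of $u-1$ with $f(z)^p$ also contains the term $(u-1)_0\,b_k$, and at that stage $b_k$ is not yet known to lie in $\Z_p$; this is harmless because $u(0)=f(0^p)/f(0)^p=1$ exactly, so $(u-1)_0=0$ (alternatively, keep that term and divide by the unit $1+(u-1)_0$ of $\Z_p$ to get $b_k\in\Z_p$ first). With that one sentence added, your induction is airtight, and your proof is essentially the classical argument the paper implicitly relies on.
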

For a more general statement and the proof  see \cite{dw94}, p.54. For the following theorem the reader is referred to \cite{dw69, dwo73}.
\begin{theo}
\label{9/11}
Let $F,\losu$ as before. We have
$$
\frac{\losu(\dwork{p}(a)|z^p)}{F(\dwork{p}(a)|z^p)}\equiv p\frac{\losu(a|z)}{F(a|z)} \pmod {p\Z_p[[z]]}.
$$
\end{theo}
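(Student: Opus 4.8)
The plan is to compare the two logarithmic solutions $\losu(a|z)+F(a|z)\log z$ of the hypergeometric operators attached to $a$ and to $\dwork{p}(a)$ by exploiting the $p$-adic relation between the coefficients $(a_i)_k/k!$ and $(\dwork{p}(a_i))_k/k!$. First I would recall the Gross--Koblitz / Dwork-type congruence for Pochhammer ratios: writing $A_k(a)=\frac{(a_1)_k\cdots(a_n)_k}{k!^n}$, one has, for each $k$, a factorization of $A_k(a)$ along the base-$p$ expansion of $k$, and the Dwork operator $\dwork{p}$ is precisely the combinatorial shift that makes $A_{pk}(\dwork{p}(a))$ and $A_k(a)$ congruent modulo $p$ up to the "carry" correction. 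Concretely, the key input is that $A_{pk+j}(\dwork{p}(a))\equiv A_k(a)\cdot A_j(\dwork{p}(a))\pmod{p\Z_p}$ for $0\le j\le p-1$ (this is the standard Dwork congruence for the hypergeometric coefficients, valid since all $a_i$ are good, i.e. $p$-integral); I would either cite \cite{dw69, dwo73} for this or sketch it from the product formula $\dwork{p}(x)=\frac{x+x_0}{p}$ with $0\le x_0\le p-1$, which gives $(\dwork{p}(a_i))_{pk}/(pk)!\equiv (a_i)_k/k!$ after grouping the $p$ factors in each block of $p$ consecutive integers.

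Next I would set up the analytic comparison. Both $y_1(a|z)=F(a|z)$ and $y_2(a|z)=\losu(a|z)+F(a|z)\log z$ span the local solution space at $z=0$ of a Fuchsian operator with maximal unipotent monodromy, and the quotient $\frac{\losu}{F}$ is the logarithmic part, characterized by $F\cdot\theta\!\left(\frac{\losu}{F}\right)=\theta(\losu)-\frac{\losu}{F}\theta(F)$ together with the normalization $\frac{\losu}{F}\in z\Q_p[[z]]$. The strategy is: from the coefficientwise congruence for $A_k$, deduce a congruence between the formal power series $\losu(\dwork{p}(a)|z^p)$ and $p\,F(\dwork{p}(a)|z^p)\cdot\frac{\losu(a|z)}{F(a|z)}$ modulo $p\Z_p[[z]]$. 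The factor $p$ on the right is forced by the derivative of $\log$: differentiating $z^p$ brings down a $p$, so that the logarithmic solution in the variable $z^p$, when re-expanded, carries the extra $p$. More precisely, I would argue that $p^{-1}\losu(\dwork{p}(a)|z^p) - F(\dwork{p}(a)|z^p)\log(z^p)/p$-type bookkeeping, combined with the relation $\log(z^p)=p\log z$, shows that the "non-logarithmic remainder" on both sides agree mod $p$ once one substitutes the congruence $F(\dwork{p}(a)|z^p)\equiv F(a|z)^{?}$ — but this last point needs care, since $F(\dwork{p}(a)|z^p)$ is not literally $F(a|z)$; rather one uses that their ratio lies in $1+p\Z_p[[z]]$ by Lemma \ref{dieudonne} applied to $F(a|z)$ (whose $p$-integrality is the easy part, the coefficients $A_k(a)$ being manifestly $p$-integral for good $p$).

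The main obstacle, and the step I would spend the most effort on, is the precise manipulation of the logarithmic term: one must show that the "$\log$-free" parts match, i.e. that
$$
\losu(\dwork{p}(a)|z^p)-p\,\frac{\losu(a|z)}{F(a|z)}F(\dwork{p}(a)|z^p)\in p\Z_p[[z]],
$$
and the clean way to do this is to recognize both $\losu(\dwork{p}(a)|z^p)+F(\dwork{p}(a)|z^p)\log(z^p)$ and $p\bigl(\losu(a|z)+F(a|z)\log z\bigr)\cdot\frac{F(\dwork{p}(a)|z^p)}{F(a|z)}$ as solutions (the first exactly, the second up to $p\Z_p[[z]]$ after using the congruence on $F$) of the same first-order inhomogeneous equation for the logarithmic derivative, and then to invoke uniqueness of the normalized logarithmic solution modulo $p$. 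The delicate bookkeeping is that $\log(z^p)=p\log z$ converts the comparison of logarithmic solutions in $z^p$ into a comparison scaled by $p$, which is exactly the asserted congruence; the content of "good prime" and of the Dwork congruence on $A_k$ is what guarantees the two power series, not just the logarithmic parts, are congruent. Finally I would remark that this theorem is the engine behind Lemma \ref{dieudonne}'s application to $q(a|z)$: combined with $q(a|z)=z\exp(\losu/F)$, Theorem \ref{9/11} yields $\frac{q(\dwork{p}(a)|z^p)}{q(a|z)^p}\in 1+p\Z_p[[z]]$ whenever $\dwork{p}(a)=a$ as a set, which is how the "if" direction of Conjecture \ref{main} is meant to be approached.
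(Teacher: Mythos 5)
The paper itself gives no proof of Theorem \ref{9/11}: it is quoted as Dwork's theorem with a pointer to \cite{dw69, dwo73}, so a proof attempt has to reconstruct Dwork's argument, and your sketch does not yet do that. The most concrete problem is that your ``key input'' congruence is stated backwards. The Dwork congruence for $A_k(a)=\frac{(a_1)_k\cdots(a_n)_k}{k!^n}$ reads $A_{pk+j}(a)\equiv A_j(a)\,A_k(\dwork{p}(a))\pmod{p\Z_p}$ for $0\le j\le p-1$, equivalently $F(a|z)\equiv\bigl(\sum_{k=0}^{p-1}A_k(a)z^k\bigr)\,F(\dwork{p}(a)|z^p)\pmod{p\Z_p[[z]]}$: the Dwork-shifted parameter travels with the $z^p$ variable, exactly as in the statement of Theorem \ref{9/11}. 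Your version $A_{pk+j}(\dwork{p}(a))\equiv A_k(a)A_j(\dwork{p}(a))$ (and the claim $A_{pk}(\dwork{p}(a))\equiv A_k(a)$) puts $\dwork{p}(a)$ on the wrong side; since $\dwork{p}$ is not an involution in general this is a different, and false, statement: for $n=1$, $a=1/5$, $p=2$, $k=1$, $j=0$ one has $A_2(\dwork{2}(1/5))=A_2(3/5)=12/25\equiv 0$, while $A_1(1/5)\equiv 1\pmod 2$. Likewise the assertion that the ratio of $F(\dwork{p}(a)|z^p)$ and $F(a|z)$ lies in $1+p\Z_p[[z]]$ ``by Lemma \ref{dieudonne}'' is not correct: Dwork's lemma applied to the $p$-integral series $F(a|z)$ controls $F(a|z^p)/F(a|z)^p$, not this ratio, and the true relation is congruence to the truncated sum $\sum_{k=0}^{p-1}A_k(a)z^k$, which is not $1$ (already for $n=1$, where $F(a|z)=(1-z)^{-a}$, the ratio is congruent mod $p$ to $(1-z)^{x_0}$ with $x_0=p\dwork{p}(a)-a$).

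More seriously, the step that actually \emph{is} Theorem \ref{9/11} --- passing from coefficient congruences for the holomorphic solution to the congruence for the logarithmic ratio $\losu/F$ with the factor $p$ --- is delegated in your sketch to ``uniqueness of the normalized logarithmic solution modulo $p$''. No such uniqueness is available: modulo $p$ the derivation $\theta$ annihilates every series in $z^p$, so the relevant equation has an enormous kernel modulo $p$ and being a solution of it mod $p$ pins nothing down; controlling exactly this discrepancy is the substance of Dwork's proof (congruences between truncations of $F$ and $\losu$ for the parameters $a$ and $\dwork{p}(a)$ and his $p$-adic analytic machinery in \cite{dw69, dwo73}; a self-contained combinatorial treatment appears in \cite{kr10}), and nothing in your sketch substitutes for it. The surrounding remarks --- the factor $p$ coming from $\log(z^p)=p\log z$, and the use of the theorem together with Lemma \ref{dieudonne} to deduce integrality of $q(a|z)$ when $\dwork{p}$ permutes the $a_i$ --- are correct, but they reproduce the easy bookkeeping (essentially Lemma \ref{9/11cor}), not the proof of the theorem.
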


%%%%%%%%%%%%%%%%%%%%%%%%%%%%%%%%%%%%%%%%%%%%%%%%%%%%%%%%%%%%%%%%%%%%%%%%%%5
\subsection{Consequences of Dwork's theorem}
The  mirror map is the invertible function $q(a| z)=z\exp(\frac{\losu(a| z)}{F(a | z)})$. The inverse of the mirror map is important for 
the construction of modular-type functions. 
In this section we give conditions for integrality of the mirror map.
%Note that our $q$ differs from the same $q$ in \cite{DoGaMoSh} by a multiplication of the constant $\frac{1}{2}v_1v_2$.
%Since, we exclude primes which divide the denominators of $v_1$ and $v_2$ and also $2$, this will not be harmful.

\begin{lemm}
\label{9/11cor}
 Let $a_1,a_2,\ldots,a_n\in\Q$ with the conditions \eqref{5mar} and \eqref{5mar2013} for a prime $p$. Then
$$
q(a|z)\in z\Z_p[[z]],
$$
\end{lemm}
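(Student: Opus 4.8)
The plan is to apply the Dieudonn\'e--Dwork criterion (Lemma \ref{dieudonne}) to the normalized mirror map $\frac{q(a|z)}{z}=\exp(\frac{\losu(a|z)}{F(a|z)})\in 1+z\Q_p[[z]]$. By that lemma it suffices to show
$$
\frac{q(a|z^p)/z^p}{(q(a|z)/z)^p}\in 1+p\Z_p[[z]],
$$
which, after taking logarithms, is equivalent to
$$
\frac{\losu(a|z^p)}{F(a|z^p)}-p\frac{\losu(a|z)}{F(a|z)}\in p\Z_p[[z]].
$$
So the whole statement reduces to this single congruence for the ratio $\losu/F$.

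\textbf{Key steps.} First I would invoke Theorem \ref{9/11}, which gives exactly this congruence but with $\dwork{p}(a)$ in place of $a$ on the left-hand side:
$$
\frac{\losu(\dwork{p}(a)|z^p)}{F(\dwork{p}(a)|z^p)}\equiv p\frac{\losu(a|z)}{F(a|z)}\pmod{p\Z_p[[z]]},
$$
where $\dwork{p}(a)=(\dwork{p}(a_1),\ldots,\dwork{p}(a_n))$. Second, I would use the hypotheses \eqref{5mar} and \eqref{5mar2013} to identify $\dwork{p}(a)$ with $a$ up to the symmetry of $F$ and $\losu$. For $n\neq 2$, condition \eqref{5mar2013} says $\{\dwork{p}(a_1),\ldots,\dwork{p}(a_n)\}=\{a_1,\ldots,a_n\}$ as sets, and since $F(a|z)$ and $\losu(a|z)$ are manifestly symmetric in the parameters $a_1,\ldots,a_n$ (they only involve $(a_1)_k\cdots(a_n)_k$ and the symmetric sum $\sum_j\sum_i(\frac1{a_j+i}-\frac1{1+i})$), we get $F(\dwork{p}(a)|z)=F(a|z)$ and $\losu(\dwork{p}(a)|z)=\losu(a|z)$. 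Hence the left-hand side of Theorem \ref{9/11} equals $\frac{\losu(a|z^p)}{F(a|z^p)}$, and the desired congruence follows. For $n=2$, condition \eqref{5mar} allows the extra possibility $\{\dwork{p}(a_1),\dwork{p}(a_2)\}=\{1-a_1,1-a_2\}$; here I would use the Euler transformation identity for ${}_2F_1$, which relates $F(1-a_1,1-a_2|z)$ (and the associated logarithmic solution) to $F(a_1,a_2|z)$ in a way that leaves the ratio $\losu/F$ — equivalently the mirror map — unchanged, so again the left-hand side of Theorem \ref{9/11} is $\frac{\losu(a|z^p)}{F(a|z^p)}$. Third, having established
$$
\frac{\losu(a|z^p)}{F(a|z^p)}-p\frac{\losu(a|z)}{F(a|z)}\in p\Z_p[[z]],
$$
I conclude by Lemma \ref{dieudonne} applied to $q(a|z)/z$ that $q(a|z)/z\in 1+z\Z_p[[z]]$, i.e. $q(a|z)\in z\Z_p[[z]]$.

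\textbf{Main obstacle.} The routine part is the symmetry bookkeeping and the reduction via Dieudonn\'e--Dwork; the one place that needs genuine care is the $n=2$ case, where one must check that the Euler identity $ {}_2F_1(a_1,a_2;1;z)=(1-z)^{1-a_1-a_2}\,{}_2F_1(1-a_1,1-a_2;1;z)$ indeed intertwines the Frobenius logarithmic solutions so that the mirror map is invariant under $(a_1,a_2)\mapsto(1-a_1,1-a_2)$ — the prefactor $(1-z)^{1-a_1-a_2}$ contributes a logarithmic term $(1-a_1-a_2)\log(1-z)$ whose contribution to $\losu/F$ must be tracked. Once this identity is verified (it is classical, and is precisely why $n=2$ is singled out in the statement of Conjecture \ref{main}), the argument closes with no further difficulty. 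The only other point to note is that the hypotheses guarantee $p$ is a good prime, so all the power series in sight lie in $\Q_p[[z]]$ and the manipulations above are legitimate.
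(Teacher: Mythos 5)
Your proposal is correct and follows essentially the same route as the paper: apply the Dieudonn\'e--Dwork criterion to $q(a|z)/z$, use the hypotheses \eqref{5mar}, \eqref{5mar2013} (symmetry of $F$ and $\losu$ in the parameters, and the Euler identity with its matching logarithmic solution for $n=2$) to replace $\dwork{p}(a)$ by $a$ in Theorem \ref{9/11}, and conclude. The only point you gloss over with ``equivalent after taking logarithms'' is justified in the paper by the elementary estimate $\ord_p(k!)<k$, which shows $\exp(p\,g(z))\in 1+p\,z\Z_p[[z]]$ for $g\in z\Z_p[[z]]$; this is the direction actually needed and is routine.
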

\begin{proof}
 Let $f(z)= \dfrac{\losu(a|z)}{F( a|z)}$. Since $f(z)\in z\Q[[z]]$, so $\exp(f(z))\in 1+z\Q[[z]]$. 
Our assumption and Theorem \ref{9/11} imply  that
$$
f(z^p)-pf(z)=p\cdot g(z) ,\ \  \ \ g(z)\in z\Z_p[[z]].
$$
Since 
$\exp(p\,.g(z))=1+\sum_ {k=1}^\infty \frac{p^k}{k!}g(z)^k$ and $ord_p(k!)< k$, we find that 
$$
\frac{\exp f(z^p)}{(\exp f(z))^p}=\exp (p\,.g(z))\in 1+p\,z\Z_p[[z]].
$$
Now by applying Lemma \ref{dieudonne}, the result follows. 
\end{proof}

\begin{lemm}
\label{nopint}
If $q(a| z)$ is $p$-integral then
\begin{equation}
\label{4dec2012}
\frac{\losu(\dwork{p}(a) |z)}{F(\dwork{p}(a) |z)}\equiv \frac{\losu(a|z)}{F(a|z)} \pmod {p\Z_p[[z]]}.
\end{equation}
Furthermore if $q(a|z)$ is $p$-integral for all except a finite number of  primes then the above congruence is an equality for all good 
primes $p$.
\end{lemm}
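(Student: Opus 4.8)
The plan is to deduce the congruence \eqref{4dec2012} directly from Dwork's theorem (Theorem \ref{9/11}) together with Dwork's lemma (Lemma \ref{dieudonne}), using the $p$-integrality of $q(a|z)$ as the input. First I would write $f(z)=\losu(a|z)/F(a|z)\in z\Q[[z]]$, so that $q(a|z)=z\exp(f(z))$ and the hypothesis says $z\exp(f(z))\in z\Z_p[[z]]$, i.e.\ $\exp(f(z))\in 1+z\Z_p[[z]]$. Applying Lemma \ref{dieudonne} to $\exp(f(z))$ gives $\exp(f(z^p))/\exp(f(z))^p\in 1+p\Z_p[[z]]$, that is $\exp\big(f(z^p)-pf(z)\big)\in 1+p\Z_p[[z]]$. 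Taking $p$-adic logarithms (valid since the argument lies in $1+p\Z_p[[z]]$ and $\log(1+pw)=\sum(-1)^{k+1}(pw)^k/k$ converges in $p\Z_p[[z]]$ because $\ord_p(p^k/k)\geq 1$) shows $f(z^p)-pf(z)\in p\Z_p[[z]]$.

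Next I would invoke Theorem \ref{9/11}, which states $\losu(\dwork{p}(a)|z^p)/F(\dwork{p}(a)|z^p)\equiv p\,f(z)\pmod{p\Z_p[[z]]}$; denoting $\tilde f(z)=\losu(\dwork{p}(a)|z)/F(\dwork{p}(a)|z)$, this reads $\tilde f(z^p)\equiv pf(z)\pmod{p\Z_p[[z]]}$. Combining with the previous paragraph, $\tilde f(z^p)\equiv f(z^p)\pmod{p\Z_p[[z]]}$. Since the substitution $z\mapsto z^p$ is injective on $\Q_p[[z]]$ and reflects congruences mod $p\Z_p[[z]]$ (a power series is in $p\Z_p[[z^p]]$ iff it is the image under $z\mapsto z^p$ of something in $p\Z_p[[z]]$), we conclude $\tilde f(z)\equiv f(z)\pmod{p\Z_p[[z]]}$, which is exactly \eqref{4dec2012}.

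For the last sentence, suppose $q(a|z)$ is $p$-integral for all but finitely many primes. Fix a good prime $p_0$. The coefficients of $\tilde f-f$ are fixed rational numbers (depending only on $a$ and $p_0$, since $\dwork{p_0}(a)$ is a fixed tuple of rationals); call the $k$-th one $c_k\in\Q$. For any good prime $p$ with $p\neq p_0$ for which $q(a|z)$ is $p$-integral, the same argument as above — run with that prime $p$ — shows that the $k$-th coefficient of $\losu(\dwork{p}(a)|z)/F(\dwork{p}(a)|z)-f(z)$ lies in $p\Z_p$. This does not immediately bound $c_k$, so instead I would argue as follows: by the Remark in \S\ref{dworkmap}, the primes split into finitely many classes on which $\dwork{p}$ is constant, so $\dwork{p}(a)=\dwork{p_0}(a)$ for infinitely many primes $p$ in the class of $p_0$; for each such $p$ that is also in the cofinite good set, \eqref{4dec2012} gives $c_k\in p\Z_p$, hence $\ord_p(c_k)\geq 0$ with the numerator divisible by $p$ whenever $c_k\neq0$. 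Having $p\mid(\text{numerator of }c_k)$ for infinitely many $p$ forces $c_k=0$. Thus $\tilde f=f$ for the prime $p_0$, and since $p_0$ was an arbitrary good prime the congruence is an equality for all good primes.

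The main obstacle is the bookkeeping in the final paragraph: one must be careful that ``$p$-integral for all but finitely many primes'' does not by itself pin down $\dwork{p}(a)$, and the clean conclusion uses the finiteness of the $\dwork{p}$-classes from \S\ref{dworkmap} to produce infinitely many primes with a \emph{fixed} value $\dwork{p}(a)=\dwork{p_0}(a)$, each contributing a divisibility $p\mid c_k$ and thereby forcing $c_k=0$. The first two paragraphs are essentially a repackaging of the proof of Lemma \ref{9/11cor} run in the reverse direction, the only genuinely new technical point being that $z\mapsto z^p$ reflects congruences modulo $p\Z_p[[z]]$, which is elementary.
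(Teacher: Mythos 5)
Your proposal is correct and follows essentially the same route as the paper: the congruence comes from combining Lemma \ref{dieudonne} applied to $\exp(f)$ with Theorem \ref{9/11}, and the equality statement comes from the fact that $\dwork{p}(a)$ is constant on (infinitely many, by Dirichlet) primes in a fixed residue class modulo the common denominator, so each coefficient of the difference is divisible by infinitely many primes and hence vanishes. The only difference is presentational: you spell out the step that $z\mapsto z^p$ reflects congruences modulo $p\Z_p[[z]]$ and the Dirichlet input, both of which the paper leaves implicit.
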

We conjecture that in Lemma \ref{nopint}, the congruence (\ref{4dec2012}) is an equality. This together 
with  Conjecture \ref{10d2012} in \S \ref{1june2013}  imply Conjecture \ref{main}.  
\begin{proof}
Let 
$$
f(z):= \frac{\losu(a|z)}{F( a|z) },  \  \
f'(z):=\frac{\losu(\dwork{p}(a)|z)}{F(\dwork{p}(a)|z)}.
$$
 By Lemma \ref{dieudonne} $\exp(f)$ is $p$-integral if and only if $\exp(f(z^p)-pf(z))\in 1+p\Z_p[[z]]$ 
and by Theorem \ref{9/11} $f'(z^p)\equiv  pf(z) \pmod {p \Z_p[[z]]}$.  Combining these two facts
$$
\exp(f(z^p)-f'(z^p)) \in 1+p\,z\Z_p[[z]].
$$
or $f(z^p)-f'(z^p)=\log (1+p\,zg(z))$, for some $g(z)\in \Z_p[[z]]$. But 
$$
\log(1+p\, z\, g(z))=\sum _{n=1}^\infty (-1)^n\frac{p^nz^ng(z)^n}{n}\in p\,z\Z_p[[z]]
$$
Now, let us prove the second part. Since the number of bad primes is finite, our hypothesis implies that 
$q(a| z)$ is $p$-integral for all good primes except a finite number which may includes bad primes. 
Let $c$ is the common factor of the denominators of $a_i$'s and let $r$ be a prime residue of $c$. 
From \eqref{121212} for $p\equiv r \pmod c$ the the value of $\dwork{p}(a_i)$ is independent of the special member of this class of primes. 
On the other hand by assumption congruency \eqref{4dec2012} holds for almost all primes of this class and so  it must be equality. 
Now running over all prime residues $r$ gives the result. 
% Now, let us prove the second part. Fix a prime $p$ (bad or good). Let $a_1,a_2,\ldots,a_r$ be those $a_i$'s with denominator divisable by $p$ and let
% $a_{r+1},\ldots,a_n$ be the rest.   
% %Since the number of bad primes is finite, our hypothesis implies that 
% %$q(a| z)$ is $p$-integral for all good primes except a finite number which includes bad primes. 
% Let $c_1$  (resp. $c_2$) be  the common factor of the denominators of $a_i, i=1,2,\ldots,r$ (resp. $a_i, i=r+1,\ldots,n$). There are infinite number of primes
% with 
% $$
% p\equiv 1 \pmod{ c_1},\ \ \  p\equiv q \pmod{ c_2}
% $$
% From \eqref{121212} it follows that for $p\equiv r \pmod c$ the value of $\dwork{p}(a_i)$ is independent of the special member of this class of primes. 
% On the other hand by assumption congruency \eqref{4dec2012} holds for almost all primes of this class and so  it must be equality. 
% Now running over all prime residues $r$ gives the result. 
\end{proof}

%\label{computations}
\begin{rem}\rm
If the congruence (\ref{4dec2012}) does not happen then by Lemma \ref{nopint}, $q(a|z)$ is not $p$-integral.
In fact this turns out to be a fast way to check the non $p$-integrality than checking the non $p$-integrality of $q(a|z)$ directly. 
For instance, for $n=2$, $p=101,  a_1=169/330,\ \ a_2=139/330$ the truncated $q(a| z) \mod z^{k},\ k<101$ is $p$-integral but $q(a| z)$ is 
not $p$-integral because the congruency (\ref{4dec2012}) fails at the power $z^2$. 
%For more example of this type %see \S\ref{computations}.
\end{rem}

\begin{lemm}
\label{lem27may}
 If the mirror map $q(a| z)$ is $p$-integral then $q(\dwork{p}(a)| z)$ is also $p$-integral.
\end{lemm}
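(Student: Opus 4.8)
Lemma \ref{lem27may}: if $q(a|z)$ is $p$-integral then $q(\dwork{p}(a)|z)$ is also $p$-integral.

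The plan is to reduce everything to Lemma \ref{dieudonne} (Dwork's lemma) applied to the power series $\exp(f'(z))$, where $f'(z):=\losu(\dwork{p}(a)|z)/F(\dwork{p}(a)|z)$, exactly as in the proof of Lemma \ref{9/11cor}. So the goal is to show $\exp\big(f'(z^p)-p\,f'(z)\big)\in 1+pz\Z_p[[z]]$. First I would invoke Lemma \ref{nopint}: since $q(a|z)$ is $p$-integral, the congruence (\ref{4dec2012}) holds, i.e. $f'(z)\equiv f(z)\pmod{p\Z_p[[z]]}$ where $f(z):=\losu(a|z)/F(a|z)$. This lets me transfer the known good behavior of $f$ to $f'$. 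Concretely, write $f'(z)=f(z)+p\,h(z)$ with $h(z)\in\Z_p[[z]]$ (in fact in $z\Z_p[[z]]$ since both $f,f'\in z\Q_p[[z]]$). Then
\[
f'(z^p)-p\,f'(z)=\big(f(z^p)-p\,f(z)\big)+p\,h(z^p)-p^2 h(z).
\]

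Next I would control the first term $f(z^p)-p\,f(z)$. From the $p$-integrality of $q(a|z)=z\exp(f(z))$ together with Lemma \ref{dieudonne}, we get $\exp\big(f(z^p)-p\,f(z)\big)\in 1+pz\Z_p[[z]]$, hence $f(z^p)-p\,f(z)=\log(1+pz\,g(z))$ for some $g\in\Z_p[[z]]$; as in the proof of Lemma \ref{nopint}, $\log(1+pzg(z))\in pz\Z_p[[z]]$, so $f(z^p)-p\,f(z)\in pz\Z_p[[z]]$. Combining this with the identity above, and noting $p\,h(z^p)-p^2h(z)\in pz\Z_p[[z]]$ trivially, we obtain $f'(z^p)-p\,f'(z)\in pz\Z_p[[z]]$. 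Finally, since $\exp(pz\,u(z))=1+\sum_{k\ge1}\frac{p^k}{k!}(zu(z))^k$ and $\mathrm{ord}_p(k!)<k$ forces $p^k/k!\in p\Z_p$, we conclude $\exp\big(f'(z^p)-p\,f'(z)\big)\in 1+pz\Z_p[[z]]$. Applying Lemma \ref{dieudonne} to $\exp(f'(z))\in 1+z\Q_p[[z]]$ gives $\exp(f'(z))\in 1+z\Z_p[[z]]$, hence $q(\dwork{p}(a)|z)=z\exp(f'(z))\in z\Z_p[[z]]$, which is the assertion.

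**Main obstacle.** The only delicate point is making sure Lemma \ref{nopint} is genuinely available here: its hypothesis is precisely that $q(a|z)$ is $p$-integral, which is our hypothesis, so (\ref{4dec2012}) holds as a \emph{congruence} mod $p\Z_p[[z]]$ — that is all we need, not the conjectured equality. A second small check is that $h(z)=\big(f'(z)-f(z)\big)/p$ really lies in $\Z_p[[z]]$ with zero constant term; this is immediate because $f'(z)\equiv f(z)\pmod{p\Z_p[[z]]}$ and both series have zero constant term (they are $z$ times power series). Beyond these bookkeeping items the argument is a direct replay of the proof of Lemma \ref{9/11cor} with $f$ replaced by $f'$, so no new ideas are required; one could even phrase the whole thing as: "$p$-integrality of $q(a|z)$ $\Rightarrow$ (\ref{4dec2012}) $\Rightarrow$ $f'$ satisfies the same Dwork-lemma criterion as $f$ $\Rightarrow$ $q(\dwork{p}(a)|z)$ is $p$-integral."
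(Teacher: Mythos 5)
Your proof is correct and rests on exactly the same key input as the paper's: the congruence (\ref{4dec2012}) from Lemma \ref{nopint}, i.e. $f'-f\in p\Z_p[[z]]$ in your notation, combined with the observation that $\exp$ of an element of $p\Z_p[[z]]$ lies in $\Z_p[[z]]$. The paper concludes in one line by writing $q(\dwork{p}(a)|z)=q(a|z)\exp\bigl(f'(z)-f(z)\bigr)$, a product of two $p$-integral series, whereas your re-application of Lemma \ref{dieudonne} to $\exp(f')$ (after checking $f(z^p)-pf(z)\in pz\Z_p[[z]]$) is a correct but unnecessary detour through the Dwork-lemma criterion.
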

\begin{proof}
 The proof follows directly from the congruency (\ref{4dec2012}). Note that, if $f\in p\Z_p[[z]]$, then $e^{f}\in\Z_p[[z]]$.  
\end{proof}
%%%%%%%%%%%%%%%%%%%%%%%%%%%%%%%%%%%%%%%%%%%%%%%%%%%%%%%%%%%%%%%%%%%
\subsection{Proof of Theorem \ref{maintheo}, part 1 }
\label{proofs}
The only if part of both conjectures \ref{main} and \ref{main2} 
follows from Lemma \ref{9/11cor}. So, it remains to prove Conjecture \ref{main2} for $n=1,2,3,4$
%%%%%%%%%%%%%%%%%%%%%%%%%%%%%%%%%%%%%%%%%%%%%%%%%%%%%%%%%%%%%%%%%%%%%%%%%%%%%%%%%%%%%%%%%%%%%%%%%%%%%%%%%%%
\section{A problem in computational commutative algebra}
\label{1june2013}
The only missing step for the proof of Conjecture \ref{main2} is a solution to the following conjecture:
\begin{conj}
\label{10d2012}
Let $n\not =2$ and $a_i,b_i\in\Q,\ \ i=1,2,\ldots,n$ with $0<a_i,b_i<1$, we have the equality of formal power series
\begin{equation}
\label{9dec2012}
\frac{\losu(b_1,b_2,\ldots,b_n|z)}{F(b_1,b_2,\ldots,b_n |z)}=\frac{\losu( a_1,a_2,\ldots,a_n |z)}{F(  a_1,a_2,\ldots,a_n |z)}, 
\end{equation}
if and only if 
\begin{equation}
\label{9d2012}
\{b_1,b_2,\ldots, b_n\}=\{a_1,a_2,\ldots,a_n\}. 
\end{equation} 
\end{conj}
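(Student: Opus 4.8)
The direction $\eqref{9d2012}\Rightarrow\eqref{9dec2012}$ is trivial since both $F$ and $\losu$ are symmetric in their arguments, so all the content is in the converse. I would set $h(a|z):=\losu(a|z)/F(a|z)=\sum_{k\geq 1}c_k(a)z^k$ and exploit that the generalized hypergeometric operator $\eqref{14fev}$ has a single (up to scalar) logarithmic solution $\losu+F\log z$; hence $h(a|z)$ determines, and is determined by, the differential module, i.e. the operator $\theta^n-z\prod_i(\theta+a_i)$ up to the obvious symmetry. Concretely, $y:=F\cdot h + F\log z$ solves $L_a y=0$ where $L_a=\theta^n-z\prod_{i=1}^n(\theta+a_i)$, and $F=\exp$ of an antiderivative-type expression reconstructs $F$ from $h$: indeed from $h$ one recovers $q(a|z)=z\exp(h)$, and the mirror map together with $\theta$ acting on solutions pins down $L_a$. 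So the strategy is: (1) show the pair $(F(a|z),\losu(a|z))$ — or just the ratio $h$ — recovers the unordered multiset $\{a_1,\dots,a_n\}$; equivalently show the map $a\mapsto h(a|z)$ from multisets in $(0,1)^n$ to $z\Q[[z]]$ is injective.

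**Key steps.** First, from $h(a|z)=\sum c_k(a)z^k$ I would reconstruct $F(a|z)$: since $q=z e^{h}$ has a compositional inverse $z(q)$, and the ratio $\losu/F$ together with the fact that $F$ is the \emph{holomorphic} solution normalized by $F(0)=1$ determines $F$ via the known first-order relation coming from the symmetric-square / Wronskian structure of $L_a$, one gets $F$ from $h$. Alternatively, and more robustly, note $\theta h = \theta\losu/F - \losu\,\theta F/F^2$, and iterating $\theta$ one builds enough relations; but the cleanest route is: the Wronskian of the $n$ Frobenius solutions of $L_a$ is explicitly $z^{-1}(1-z)^{-(a_1+\dots+a_n)}$ up to constant, which gives $a_1+\dots+a_n$ from the local data, hence already one symmetric function. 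Second, having $F(a|z)$, I would read off the power-series coefficients: the coefficient of $z^k$ in $F$ is $\prod_i(a_i)_k/k!^n$, and ratios of consecutive coefficients give $\prod_{i=1}^n(a_i+k)$ for every $k\geq 0$. Third — and this is the decisive elementary step — knowing the value of the polynomial $P_a(X):=\prod_{i=1}^n(X+a_i)$ at the integer points $X=0,1,2,\dots$ determines $P_a$ (a degree-$n$ polynomial is determined by $n+1$ values), hence determines the multiset $\{a_1,\dots,a_n\}$ of its roots. So it remains only to show $h$ determines $F$ (equivalently, that two different $F$'s cannot produce the same $h$); this follows because $F$ and $F\log z + \losu$ span the solution space of the same operator only if the operators agree, and the operator $\theta^n - zP_a(\theta)$ is reconstructible from any basis of its solution space.

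**Main obstacle.** The hard part is making rigorous the claim that the ratio $h=\losu/F$ alone (rather than the pair $(F,\losu)$) recovers $F$, since a priori $h$ only determines $F$ up to the ambiguity of rescaling the logarithmic solution — but that ambiguity is fixed by the normalization $F(0)=1$ and the structure of the Frobenius method, so one must verify no genuine loss of information occurs. The $n=2$ case is genuinely excluded because of the Euler transformation $F(a_1,a_2;1;z)=(1-z)^{1-a_1-a_2}F(1-a_1,1-a_2;1;z)$, which produces a second multiset $\{1-a_1,1-a_2\}$ with the same mirror map up to the $(1-z)$-twist; for $n\geq 3$ one must check that no analogous functional identity exists, which is exactly where Beukers–Heckman-type rigidity of $_nF_{n-1}$, or a direct computation of the first few $c_k(a)$ in terms of power sums of the $a_i$, settles the matter. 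I would therefore also present the fully explicit low-order route as a fallback: compute $c_1(a)=\sum_i\big(\psi(1)-\psi(a_i)\big)$-type expressions — concretely $c_1 = \sum_{i}\sum_{\text{?}}$, i.e. the $z^1$-coefficient of $\losu$ minus that of $F$ — and $c_2,\dots,c_{n+1}$, express them via the elementary symmetric functions $e_1,\dots,e_n$ of $(a_1,\dots,a_n)$, and check the resulting polynomial system has $\{a_i\}$ as its only solution. This is the "computational commutative algebra" flavor flagged in the section title, and a Gröbner-basis elimination verifies it for each fixed small $n$; the uniform statement for all $n$ is what makes Conjecture~\ref{10d2012} a conjecture rather than a lemma.
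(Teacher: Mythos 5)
You should first note that the statement you are proving is, in the paper, precisely a \emph{conjecture} (Conjecture~\ref{10d2012}): the authors do not prove it, they only verify it on special loci of parameters (e.g.\ $b_i=1-a_i$, or $b_i$ obtained from $a_i$ by the moves of Proposition~\ref{27may2013}) by computing primary decompositions of the truncated ideals $I_{n,m}$ in {\tt Singular}, and they record that a general proof was later given by Roques via differential Galois theory. So a complete blind proof would be a substantial result, and your proposal does not supply one. The decisive gap is the claim that the ratio $h=\losu(a|z)/F(a|z)$ determines $F$ (equivalently, the operator $\theta^n-z\prod_i(\theta+a_i)$). For $n\geq 3$ the pair $(F,\losu+F\log z)$ spans only a two\--dimensional subspace of the $n$\--dimensional solution space, and a two\--dimensional subspace does not pin down an order\--$n$ operator; your appeal to ``the operator is reconstructible from any basis of its solution space'' begs the question, since $h$ does not give you a basis. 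Once this step is granted, the rest of your argument (ratios of consecutive coefficients of $F$ give $\prod_i(a_i+k)$ for all $k$, hence the polynomial $\prod_i(X+a_i)$, hence the multiset) is trivial --- which is a sign that the whole difficulty has been pushed into the unproved step. In fact that step cannot be proved by any argument of the kind you outline: the paper points out that the conjecture is \emph{false} over $\C$, i.e.\ the variety $V(I_n)$ has components besides the permutation locus \eqref{9d2012}, and the conjecture is exactly the assertion that those extra components carry no non\--trivial $\Q$\--rational points. Your main line of reasoning never uses the rationality of the $a_i,b_i$ (nor the condition $0<a_i,b_i<1$), so if it worked it would prove the false complex statement.

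The same objection applies to your fallback: computing $c_1,\dots,c_{n+1}$ in terms of the elementary symmetric functions and ``checking the resulting polynomial system has $\{a_i\}$ as its only solution'' is not available, because the system has other complex solution components; only their $\Q$\--points are (conjecturally) absent, and detecting that is exactly the computational commutative algebra problem of the paper --- which the authors themselves could not carry out for $I_{n,n}$, $n\geq 3$, with free parameters, forcing them to restrict to the Dwork\--motivated subloci. Two smaller inaccuracies: the Wronskian of the Frobenius basis involves all $n$ solutions, so it is not ``local data'' extractable from $h$, and you cannot use it to read off $a_1+\cdots+a_n$; and your explanation of why $n=2$ is exceptional is correct in spirit (the Euler identity, equivalently the fact that for second\--order equations the ratio of solutions determines the equation only up to projective change), but for $n\geq 3$ the absence of an ``analogous functional identity'' is not something Beukers--Heckman rigidity hands you for the ratio $\losu/F$; it is the open content of the conjecture. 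What your proposal does correctly identify is the trivial direction ($\{b_i\}=\{a_i\}$ implies \eqref{9dec2012} by symmetry of $F$ and $\losu$) and the reduction ``equal $F$'s $\Rightarrow$ equal multisets''; everything between these two points is missing.
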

Let 
$$
\frac{\losu(a| z)}{F(a| z)}=\sum_{i=1}^\infty C_k(a)z^k,\ \ \ C_k(a)\in\Q[a].
$$
and 
$I_{n,m}$ be the ideal in $\Q[a,b]$ generated by $C_{k}(a)-C_k(b),\ \ k=1,2,\ldots,m$ and $I_n=I_{n,\infty}$.
The above conjecture over $\C$ is false, that is, the variety given by $I_n$ has many components other than those obtained by the 
permutation (\ref{9d2012}). Therefore, the above conjecture is equivalent to say that such extra components have no non-trivial  $\Q$-rational points.
We are planing to write a separate article on this topic.
In this section we will try to avoid Conjecture \ref{10d2012} in such a general context. 
Instead, we will use the structure of the operator $\dwork{p}$ in order 
to reduce the number of variables  and try to solve Conjecture \ref{10d2012} for such particular cases.
Below, we are going to write down the primary decomposition of $I_{n,m}$ in the ring $\Q[ a, b]$ for many particular cases. 
We have used the  Gianni-Trager-Zacharias algorithm implemented in {\tt Singular} under the command {\tt primdecGTZ}, see \cite{GPS01}.
The corresponding computer codes can be found in the first author's web page.  
%The case $n=2$ is already discussed in \S\ref{n=2}, where instead of $\Q$ we could use a field of arbitrary charachteristic.
%%%%%%%%%%%%%%%%%%%%%%%%%%%%%%%%%%%%%%%%%%%%%%%%
\subsection{The case $n=2$}
\label{n=2}
The case $n=2$ is different because of the Euler identity.
We have the primary decomposition
\begin{equation}
\label{12/04/2013}
I_{2,2}=
\langle
a_{2}-b_{2}, 
a_{1}-b_{1}
\rangle
\cap 
\langle
a_{2}-b_{1},
a_{1}-b_{2}
\rangle
\cap 
\langle
a_{2}+b_{2}-1,
a_{1}+b_{1}-1
\rangle\cap
\langle 
a_{2}+b_{1}-1, a_{1}+b_{2}-1\rangle
\end{equation}
in the ring $\Q[a,b]$. Therefore, the equality (\ref{10d2012}) is valid  if and only if
\begin{equation}
\{b_1,b_2\}=\{a_1,a_2\} \hbox{ or } \{1-a_1,1-a_2\}. 
\end{equation}
The second possibility for $\{b_1,b_2\}$
is due to the Euler identity for the Gauss hypergeometric function   $_2F_1(a,b,c| z)=\sum_{k=0}^\infty \frac{(a)_k(b)_k}{k!(c)_k} z^k$: 
$$
_2F_1(a,b,c| z)=(1-z)^{c-a-b}\,  _2F_1(c-a,c-b,c| z)
$$
Note that we put $c=1$ and the same equality is valid for the logarithmic solution. This proves Theorem \ref{theo2} part 2 for $n=2$. 
%%%%%%%%%%%%%%%%%5
\subsection{The symmetry}
\label{12apr2013}
%In this section we prove Conjecture \ref{10d2012} for $n=3,4$  and in the special case $b_i=1-a_i,\ i=1,2,\ldots,n$. 
%It follows from the purely algebraic statement below.
%\begin{lemm}
%\label{26apr13}
Let $b_i=1-a_i,\ \ i=1,2,\ldots,n$ and let us restrict the ideal $I_{n,m}$ to this locus. 
For $n=3$ we have the primary decomposition
$$
I_{3,3}=
\langle
a_{2}+a_{3}-1,
2a_{1}-1
\rangle
\cap
\langle
2a_{3}-1,
a_{1}+a_{2}-1
\rangle\cap 
\langle
2a_{2}-1, 
a_{1}+a_{3}-1
\rangle
\cap
$$
$$
\langle
a_{3}-1, 
a_{2}-1, 
a_{1}-1
\rangle\cap
\langle
a_{3},
a_{2}, 
a_{1}
\rangle
$$
and for $n=4$ we have the primary decomposition
$$
I_{4,4}=
\langle
a_{2}+a_{3}-1, 
a_{1}+a_{4}-1
\rangle
\cap
\langle
a_{3}+a_{4}-1, 
a_{1}+a_{2}-1
\rangle
\cap
\langle
a_{2}+a_{4}-1, 
a_{1}+a_{3}-1
\rangle
\cap
$$
$$
\langle
a_{3}^{2}+a_{4}^{2}-2a_{3}-2a_{4}+2, 
a_{2}+a_{3}-2, 
a_{1}+a_{4}-2
\rangle
\cap
\langle
a_{3}^{2}+a_{4}^{2}, 
a_{2}+a_{3}, 
a_{1}+a_{4}
\rangle
\cap
$$
$$
\langle
a_{3}+a_{4}, 
a_{2}^{2}+a_{4}^{2}, 
a_{1}+a_{2}
\rangle
\cap
\langle
a_{3}^{2}+a_{4}^{2}-2a_{3}-2a_{4}+2, 
a_{2}+a_{4}-2, 
a_{1}+a_{3}-2
\rangle
\cap
$$
$$
\langle
a_{3}^{2}+a_{4}^{2}, 
a_{2}+a_{4}, 
a_{1}+a_{3}
\rangle
\cap
\langle
a_{3}+a_{4}-2, 
a_{2}^{2}+a_{4}^{2}-2a_{2}-2a_{4}+2, 
a_{1}+a_{2}-2
\rangle.
$$
%\end{lemm}
%\begin{coro}
%\label{25may2013}
We conclude that  Conjecture \ref{10d2012} is true for $n=3,4$ and $b_i=1-a_i$. 
Note that for $n=4$ the components which are not in the variety $\{a_i,\ i=1,2,3,4\}=\{1-a_i,\ i=1,2,3,4\}$ do not have $\Q$-rational points.

%%%%%%%%%%%%%%%%%%%%%%%%%%%%%%%%%%%%%%%%%%%%%%%%%%%%%%%%%%%%%%%%%%%%%%%%%%%%%%%%%%%%%%%%%%%%%%%%%%%%%%%%%%%%%%%%%%
\subsection{Proof of Theorem \ref{maintheo}, part 2}
The only missing step is the verification of Conjecture \ref{10d2012}. The case $n=1$ can be done by hand and the case $n=2$
is done in \S\ref{n=2}. For $n\geq 3$ we do not get the primary decomposition of $I_{n,n}$ in Singular, therefore,  
we do not have a proof for Conjecture \ref{10d2012} with arbitrary parameters $a_i$ and $b_i$.
However, we may try to prove it for particular classes of parameters $a_i,b_i$.
The particular cases that appear in this section  are motivated  by 
the structure of $\dwork{p}$ described in Proposition 
\ref{27may2013}. 

First, let us take primes $p$  such that
$\dwork{p}(x)=1-x$, see Proposition \ref{27may2013} item 1.  In this case we have the primary decompositions in \S\ref{12apr2013} which implies that the set of $a_i$'s is invariant 
under $x\mapsto 1-x$. For $n=3$ we conclude that one of the parameters, let us say $a_3$,  is $\frac{1}{2}$ and $a_1=1-a_2$.
Now, restricted to $a_3=b_3=\frac{1}{2}$ we have the primary decomposition:
$$
I_{3,5}=
\langle
a_{2}-b_{1}, 
a_{1}-b_{2}
\rangle\cap
\langle
a_{2}-b_{2}, 
a_{1}-b_{1}
\rangle\cap
\langle
b_{2}-1, 
a_{2}+b_{1}-1, 
a_{1}-1
\rangle\cap
$$
$$
\langle
b_{2}-1, 
a_{2}-1, 
a_{1}+b_{1}-1
\rangle\cap
\langle
b_{1}-1, 
a_{2}+b_{2}-1, 
a_{1}-1
\rangle\cap
\langle
b_{1}-1, 
a_{2}-1, 
a_{1}+b_{2}-1
\rangle
$$
This proves Conjecture \ref{10d2012} in this particular case. Note that we use the fact that none of parameters is $1$.
  
For $n=4$, in a similar way , we can assume that $a_3=1-a_1$ and $a_4=1-a_2$. Again we do not  get the primary decomposition of $I_{4,4}$
restricted to the parameters $a_3=1-a_1, \ a_4=1-a_2, b_3=1-b_1, \ b_4=1-b_2$. We further use the structure of $\dwork{p}$ in order
to reduce the number of variables so that we can compute the primary decomposition of $I_{4,4}$.

For $q=2$ or $3$ fixed, let us consider the case in which  $q$ does not appear in the denominators of $a_i$'s. 
Using Proposition \ref{27may2013} item 2, we take primes $p$ such that $\dwork{p}(a_1)=qa_1-i$ and $\dwork{p}(a_2)=qa_2-j$, where $i,j$ are some integers
between $0$ and $q-1$.
 \begin{lemm}
\label{25may13}
 For $q=2,3$ and $i,j=0,1,\ldots,q-1$, the conjecture \ref{10d2012} is true for $n=4$ and 
$$
a_3=1-a_1, a_4=1-a_2,\ 
$$
$$
b_1=qa_1-i, b_2=qa_2-j,\ b_3=1-b_1,\ b_4=1-b_2 
$$
In each case the variety $V(I_{4,4})$ is a point with rational coordinates and if
we take  $0<a_i\leq \frac{1}{2}$ then  we get $7$ of $14$-hypergeometric cases.
\end{lemm}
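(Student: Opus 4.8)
The plan is to reduce Conjecture \ref{10d2012} in this situation to a finite computation in commutative algebra, exactly in the spirit of \S\ref{12apr2013}. The $(\Leftarrow)$ direction is trivial, so we focus on $(\Rightarrow)$: we must show that the only $\Q$-rational solutions of \eqref{9dec2012} with the prescribed shape $a_3 = 1-a_1$, $a_4 = 1-a_2$, $b_1 = qa_1 - i$, $b_2 = qa_2 - j$, $b_3 = 1-b_1$, $b_4 = 1-b_2$ are those forced by the set equality \eqref{9d2012}. First I would substitute these linear relations into the power series $\frac{\losu(a|z)}{F(a|z)} - \frac{\losu(b|z)}{F(b|z)}$, leaving only the two free variables $a_1, a_2$; for each of the (at most) $q^2 \le 9$ choices of the pair $(i,j)$ this gives a separate two-variable problem. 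The ideal $I_{4,4}$ restricted to this locus becomes an ideal $J_{i,j} \subset \Q[a_1,a_2]$ generated by the first four reduced coefficients $C_k(a)-C_k(b)$, $k=1,\dots,4$ (after clearing denominators in $F$). Since we are now in two variables, {\tt primdecGTZ} in {\tt Singular} terminates, and I would run it for each $(i,j)$.

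The next step is to read off, from each primary decomposition, the $\Q$-rational points of $V(J_{i,j})$ and to check that in each case the set $\{a_1,a_2,1-a_1,1-a_2\}$ equals $\{qa_1-i,\,qa_2-j,\,1-qa_1+i,\,1-qa_2+j\}$. Note that the relation $b_3 = 1-b_1$, $b_4 = 1-b_2$ has already built in the invariance of the $b$-set under $x \mapsto 1-x$, and the same for the $a$-set, so the only combinatorial possibilities to match up are $\{a_1, 1-a_1\}$ against one of the $b$-pairs. Concretely, either $qa_1 - i = a_1$ (i.e. $a_1 = i/(q-1)$) or $qa_1 - i = 1 - a_1$ (i.e. $a_1 = (1+i)/(q+1)$), and similarly for $a_2$ with $j$; one expects the primary decomposition to produce exactly these linear components (plus possibly components with no $\Q$-points, or with $a_i \in \{0,1\}$ or $a_i$ outside $(0,1)$, which are discarded). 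Imposing $0 < a_i \le \tfrac12$ then pins down finitely many rational points, and I would tabulate them and verify by inspection that the resulting $4$-tuples $(a_1,a_2,1-a_1,1-a_2)$ are precisely $7$ of the $14$ entries in Table~1 — the ones whose denominators are coprime to the chosen $q$.

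The main obstacle I anticipate is not conceptual but bookkeeping and reliability: one must be careful that the truncation at $k=4$ already cuts out the full variety $V(I_4)$ on this two-variable locus (i.e. that $I_{4,4}$ and $I_{4}$ have the same zero set here), which is why the statement of the lemma is phrased for $I_{4,4}$ and why one checks a few extra coefficients as a sanity test, just as was done for $I_{3,5}$ above; and one must correctly discard the spurious components (those with $a_i = 0,1$, those with no rational points, and those landing outside $(0,\tfrac12]$), using — as in \S\ref{n=2} and \S\ref{12apr2013} — the hypothesis $0 < a_i < 1$ and the normalization $0 < a_i \le \tfrac12$. Since there are only $q^2 \le 9$ cases for each of $q=2,3$, this is a finite, machine-checkable verification, and the computer codes are as referenced in the first author's web page.
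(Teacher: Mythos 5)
Your proposal is essentially the paper's own proof: a purely computational verification in which, for each of the $q^2$ choices of $(i,j)$, one substitutes the given linear relations, computes the primary decomposition of the restricted two-variable ideal $I_{4,4}$ with {\tt primdecGTZ} in {\tt Singular}, and then reads off the rational points, imposes $0<a_i\leq \frac{1}{2}$, and matches them with $7$ of the $14$ cases of Table 1. One minor caveat: your heuristic that solutions should only come from $qa_1-i\in\{a_1,1-a_1\}$ (and likewise for $a_2$) is too narrow, since cross matchings such as $qa_1-i=a_2$ do occur (e.g.\ $(a_1,a_2)=(\frac{1}{5},\frac{2}{5})$ for $q=2$), but this is immaterial because, as in the paper, the actual proof is the machine computation rather than this combinatorial expectation.
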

\begin{proof}
The proof is purely computational. For $q=2$ or $3$ we have considered $q^2$ cases. In each case, we find that the variety 
$V(I_{4,4})$ consists only of one point. It has rational coordinates. If we assume that $0<a_i\leq \frac{1}{2}$ then for 
$q=2$ we have only the solutions $(a_1,a_2)=(\frac{1}{5},\frac{2}{5}),\ (\frac{1}{3},\frac{1}{3})$. For $q=3$ we get
$$
(a_1,a_2)= (\frac{1}{2}, \frac{1}{2}), (\frac{1}{4},\frac{1}{2}), ( \frac{1}{4}, \frac{1}{4}), (\frac{1}{5}, \frac{2}{5}), (\frac{1}{8}, \frac{3}{8}), 
(\frac{1}{10}, \frac{3}{10}).
$$
\end{proof}
Now, let us assume that $2$ and $3$ appears in the denominators of $a_i$'s. We are going to use Proposition \ref{27may2013} item 4 for $q=2,3$ and $m=1$.
For $q=2$, we take primes $p$ such that $\dwork{p}(a_i)=a_i+r_i,\ i=1,2$, where $\{r_1,r_2\}\subset\{0, \frac{1}{2},-\frac{1}{2}\}$. 
In a similar way, for $q=3$ we take primes $p$ such that $\delta_p(a_i)=a_i+s_i,\ i=1,2$, where 
$\{s_1,\ s_2\}\subset \{0, \frac{1}{3}, -\frac{2}{3}, \frac{2}{3}, -\frac{1}{3}  \}$. We can  assume that $r_1$ and $r_2$ (resp. $s_1$ and $s_2$) 
are not simultaneously zero. 
Let  $I_{4,4,2,r_1,r_2},$ be $I_{4,4}$ restricted to the parameters: 
$$
a_3=1-a_1, a_4=1-a_2,\ 
$$
$$
b_1=a_1+r_1, b_2=a_2+r_2,\ b_3=1-b_1,\ b_4=1-b_2,\ \  
$$
and in a similar way define $I_{4,4,3,s_1,s_2}$.  The variety $V(I_{4,4,2,r_1,r_2})\cap V(I_{4,4,3,s_1,s_2})$ is just one point. 
Moreover, it has rational coordinates.  
If we put the condition $0<a_i\leq \frac{1}{2}$ we get the remaining $7$ examples in the list (\ref{14cases}).

% 
% 
% Let also $\epsilon=0$ if $p$ does not divide the denominator of some $a_i$ and $\epsilon=1$ otherwise.
% \begin{lemm}
% Then $F(p^cz), \losu(p^{c+\epsilon})$  and the mirror map
% $q(p^{c+\epsilon+1} z)$ are $p$-integral.
% \end{lemm}
% \begin{proof}
% Let $a_i=\frac{r_i}{s_i},\ \ (r_i,s_i)=1$. For each $(a_i)_m$ we nead to multiply it with  $p^{m\cdot \ord_ps_i}$ in order to make it $p$-integral. 
% By the equality (\ref{27may}),  
% $\frac{p^{m\lceil \frac{r}{p-1}\rceil}}{(m!)^n}$ is $p$-integral. For some $a_i$ such that $p$ does not divide $s_i$,  $\frac{(a_i)_m}{m!}$ is 
% $p$-integral. 
% All these together imply that $F_0(p^cz)$ is $p$-inegral.
% 
% For $\losu$ we proceed as follows.  Let  $H_x(k)=\sum_{i=0}^{k-1}\frac{1}{x+i}$.
% If $p| s_i$  then $H_{a_i}(k)$ is automatically $p$-integral and if $p\not| s_i$ then $(a_i)_m H_{a_i}(m)$ is integral. 
% If $p$ does not divide the denominator of some $a_i$ then $ (a_i)_mH_1(m)=\frac{ (a_i)_m}{m!} m!H_1(m) $ is $p$-integral. Otherwise, we need to multiply $H_{1}(m)$ with $p^m$ in order 
% to make it $p$-integral. 
% 
% The statement for $q$ follows form the fact that  $\frac{\losu}{F}(p^{c+\epsilon}z)$ is $p$-integral and $\ord_p(m!)<m$.  
% 
% \end{proof}

%%%%%%%%%%%%%%%%%%%%%%%%%%%%%%%%%%%%%%%%%%%%%%%%%%%%%%%%%%%%%%%%%%5
\subsection{Computational evidence  for conjectures \ref{main} and \ref{main2}}
\label{23may2013}
For $n=2$ and  $\{a_1,a_2\}=\{\frac{1}{2}(1\pm \frac{1}{m_1}-\frac{1}{m_2})\}$ with 
$m_1,m_2\in\N$ and $\frac{1}{m_1}+\frac{1}{m_2}>1$ the monodromy group of (\ref{14fev})
is a triangle group  of type $(m_1,m_2,\infty)$ and we have checked the Conjecture \ref{main}
for the truncated $q(a| z) \pmod{ q^{182 }}$ and for all the cases
$$
m_1\leq m_2\leq 24,\hbox{  or } m_1\leq 24,\ m_2=\infty. 
$$
and primes $2\leq p\leq 181$. For more computation of this type see the first author's homepage.
For Conjecture \ref{10d2012} (which implies Conjecture \ref{main2}), apart from its verification in particular cases
done in \S\ref{1june2013},  we have checked it for many other special loci in the parameter space $a,b$. The strategy is always to reduce 
the number of variables so that we can compute the primary decomposition of $I_{n,m}$ by a computer. The detailed discussion of this topic 
will be written somewhere  else.

%%%%%%%%%%%%%%%%%%%%%%%%%%%%%%%%%%%%%%%%%%%%%%%%%%%%%%%%%%%%%%%%%%%%%%%%%%555
\subsection{Proof of Corollary \ref{12june2013}}
\label{mtp}

% %In this section we discuss the theory of meromorphic modular-type functions for $n=4$, see \cite{ho22} for details. 
% 
% For $n=4$, let us consider one of the $14$ cases discussed in the Introduction.
% Let   $\Q(u_i,\ i=0,1,\ldots,6)$ be the field generated by  
% 
% 
% It is invariant under the derivation $\theta$. Let $N$ be given by (\ref{Nnumber}).
Corollary \ref{12june2013} follows from Theorem \ref{maintheo}, part 2 for  $n=4$ in the case of good primes and for bad primes
follows from Theorem 1 in \cite{kr10}. Indeed the $k$th coefficient of $F$- 
 the holomorphic solution of any equation in table 1 for $n=4$- is a product of expressions like
$$
A_s(k):=\frac{(r_1/s)_k\cdots(r_m/s)_k}{k!^m}
$$
where $r_1,\cdots,r_m$ is a complete set of prime residues of $s$. Hence $m=\phi(s)=s(1-\frac{1}{p_1})\cdots(1-\frac{1}{p_l})$,
where $s=p_1^{\nu_1}\cdots p_l^{\nu_l}$, and one can easily check that  $N_s^kA_s(k)\in \Z$, where  $N_s=s^m\prod_{p\mid s}p^{m/p-1}$,
(see \cite{zu02} Lemma 1). The number $N=\prod_{s}N_s$ makes $F(Nz)$ with integer coefficients. Since the ring $\Z[[z]]$ is closed under the operator $\theta$, so
 $u_i(z)\in \Z[[z]]$ for $i=0,\cdots,4$.
For $u_5$ we proceed as follows. Since $u_1(z)\in 1+z\Z[[z]]$, 
it is enough to prove the statement for $W(a|z):=\frac{u_5(z)}{u_1(z)^2}=\theta (\frac{\losu}{F})$ (here we need to emphasize that $u_i$'s depend on $a$).
First let $p$ be a good prime. Acting $\theta$
on the both sides of  the congruence in Theorem \ref{9/11} we get 
$W(\dwork{p}(a)|z^p)-W(a|z)\in\Z_p[[z]]$. Since for $k=1,\cdots, p-1$, the $k$th coefficients of $F$ and $G$ are $p$-integral,
with a inductive argument we conclude that $W(a|z)\in\Z_p[[z]]$.
Now for bad primes we use Theorem 1 in \cite{kr10}, saying that $q(z)$ has integer coefficients.  Hence applying Lemma
\ref{dieudonne} for $q(z)$, implies Theorem \ref{9/11} for bad primes, with convention $\delta_p(a)=a$ and replacing $(z,z^p)$ by $(Nz,Nz^p)$ in the statement.
Then we can repeat the above argument to get integrality of $W(a|z)$. Now since $q(z)$ is integral, so converting all $u_i$'s in $q$-coordinate
remain them with integer coefficients. $\square$

The  most important modular object arising from the periods of Calabi-Yau varieties  is the Yukawa coupling: 
$$
Y:=n_0\frac{u_1^4}{(u_5+u_1^2)^3(1-z)}=n_0+\sum_{d=1}^\infty n_d d^3\frac{q^d}{1-q^d}
$$
Here, $n_0:=\int_M\omega^3$, where $M$ is the  $A$-model Calabi-Yau threefold of mirror symmetry
and $\omega$ is the K\"ahler form (the Picard-Fuchs equation (\ref{14fev}) is satisfied by the periods of B-model Calabi-Yau threefold).
The numbers $n_d$ are supposed to count the number of rational curves of degree $d$ in a generic $M$. For the 
case $a_i=\frac{i}{5},\ i=1,2,3,4$ few coefficients $n_d$ are given by $n_d=5, 2875, 609250, 317206375,\cdots$.

\def\cprime{$'$} \def\cprime{$'$} \def\cprime{$'$}

%{\tiny

%\bibliography{biblio}

\begin{thebibliography}{10}

\bibitem{beu89}
F.~Beukers and G.~Heckman.
\newblock Monodromy for the hypergeometric function {$_nF_{n-1}$}.
\newblock {\em Invent. Math.}, 95(2):325--354, 1989.

\bibitem{brth-12}
Hugh~Thomas Christopher~Brav.
\newblock Thin monodromy in sp(4).
\newblock {\em Preprint}, 2012.

\bibitem{hokh2}
C.~Doran, T.~Gannon, H.~Movasati, and K.~Shokri.
\newblock Automorphic forms for triangle groups.
\newblock {\em under preparation}, 2013.

\bibitem{dormor}
Charles~F. Doran and John~W. Morgan.
\newblock Mirror symmetry and integral variations of {H}odge structure
  underlying one-parameter families of {C}alabi-{Y}au threefolds.
\newblock In {\em Mirror symmetry. {V}}, volume~38 of {\em AMS/IP Stud. Adv.
  Math.}, pages 517--537. Amer. Math. Soc., Providence, RI, 2006,
  arXiv:math/0505272v1[math.AG].

\bibitem{dw69}
B.~Dwork.
\newblock {$p$}-adic cycles.
\newblock {\em Inst. Hautes {\'E}tudes Sci. Publ. Math.}, (37):27--115, 1969.

\bibitem{dwo73}
B.~Dwork.
\newblock On {$p$}-adic differential equations. {IV}. {G}eneralized
  hypergeometric functions as {$p$}-adic analytic functions in one variable.
\newblock {\em Ann. Sci. \'Ecole Norm. Sup. (4)}, 6:295--315, 1973.

\bibitem{dw94}
Bernard Dwork, Giovanni Gerotto, and Francis~J. Sullivan.
\newblock {\em An introduction to {$G$}-functions}, volume 133 of {\em Annals
  of Mathematics Studies}.
\newblock Princeton University Press, Princeton, NJ, 1994.

\bibitem{alenstzu}
Duco van~Straten Gert~Almkvist, Christian van~Enckevort and Wadim Zudilin.
\newblock Tables of calabi–yau equations.
\newblock {\em Preprint}, 2010.

\bibitem{GPS01}
G.-M. Greuel, G.~Pfister, and H.~Sch\"onemann.
\newblock {\sc Singular} 2.0.
\newblock {A Computer Algebra System for Polynomial Computations}, Centre for
  Computer Algebra, University of Kaiserslautern, 2001.
\newblock {\tt http://www.singular.uni-kl.de}.

\bibitem{ksv}
Maxim Kontsevich, Albert Schwarz, and Vadim Vologodsky.
\newblock Integrality of instanton numbers and {$p$}-adic {B}-model.
\newblock {\em Phys. Lett. B}, 637(1-2):97--101, 2006, arXiv:hep-th/0603106v3.

\bibitem{kr10}
Christian Krattenthaler and Tanguy Rivoal.
\newblock On the integrality of the {T}aylor coefficients of mirror maps.
\newblock {\em Duke Math. J.}, 151(2):175--218, 2010,
  arXiv:0907.2577v2[math.NT].

\bibitem{liya}
Bong~H. Lian and Shing-Tung Yau.
\newblock Arithmetic properties of mirror map and quantum coupling.
\newblock {\em Comm. Math. Phys.}, 176(1):163--191, 1996.

\bibitem{ho22}
Hossein Movasati.
\newblock Modular-type functions attached to mirror quintic {C}alabi-{Y}au
  varieties, {I}.
\newblock {\em Submitted}, 2012.


\bibitem{roq}
Julien Roques
\newblock On generalized hypergeometric equations and mirror maps.
\newblock {\em To appear in Proceedings of the American Mathematical Society}, 2013.

\bibitem{stsa-09}
van Straten~D. Samol, K.
\newblock Dwork congruences and reflexive polytopes.
\newblock {\em arXiv:0911.0797}, 2009.

\bibitem{sar12}
Peter Sarnak.
\newblock Notes on thin matrix groups.
\newblock {\em Lecture notes at MSRI}, %{\tt
  %http://web.math.princeton.edu/sarnak/NotesOnThinGroups.pdf}, 
  2012.

\bibitem{tak77}
Kisao Takeuchi.
\newblock Arithmetic triangle groups.
\newblock {\em J. Math. Soc. Japan}, 29(1):91--106, 1977.

\bibitem{zu02}
V.~V. Zudilin.
\newblock On the integrality of power expansions related to hypergeometric
  series.
\newblock {\em Mat. Zametki}, 71(5):662--676, 2002.

\end{thebibliography}
%
%\bibliographystyle{plain}

%}
\end{document}